\documentclass[11pt]{amsart}

\usepackage{amsmath,amscd}
\usepackage{amssymb}
\usepackage{amsthm}
\usepackage{enumitem}

\usepackage{graphicx,tikz}
\usetikzlibrary{patterns}
\usetikzlibrary{decorations.pathmorphing}

\usepackage{mathtools}

\usepackage{mathrsfs}
\usepackage[ocgcolorlinks, linkcolor=blue]{hyperref}

\usepackage{calc}
             {\begin{list}{\arabic{enumi}.}{\usecounter{enumi}%
              \setlength{\labelsep}{0.5em}%
              \settowidth{\labelwidth}{(\arabic{enumi})}%
              \setlength{\leftmargin}{\labelwidth+\labelsep}}}%
             {\end{list}}

\usepackage{comment}

\DeclareRobustCommand{\SkipTocEntry}[5]{}

\usepackage{xcolor}
\definecolor{LOcolor}{RGB}{150,100,0}

\DeclareMathOperator{\WF}{WF}

\newtheorem{Theorem}{Theorem}[section]

\newtheorem{Lemma}[Theorem]{Lemma}

\newtheorem{Proposition}[Theorem]{Proposition}

\theoremstyle{definition}

\newtheorem{Remark}[Theorem]{Remark}

\numberwithin{equation}{section}

\newcommand{\norm}[1]{\lVert #1 \rVert}         

\newcommand{\ol}[1]{\overline{#1}}

\newcommand{\p}{\partial}

\newcommand{\tre}{\textcolor{red}}

\newcounter{sidenote}

\allowdisplaybreaks

\begin{document}

\title{Propagation of singularities and inverse problems for the viscoacoustic wave equation} 

\author[G. Covi]{Giovanni Covi}
\address{Department of Mathematics and Statistics, University of Jyv\"askyl\"a, PO Box 35, 40014 Jyv\"askyl\"a, Finland}
\email{giovanni.g.covi@jyu.fi} 

\author[M.V. de Hoop]{Maarten V. de Hoop}
\address{Department of Computational and Applied Mathematics, Rice University, Houston, TX, USA}
\email{mvd2@rice.edu}

\author[M. Salo]{Mikko Salo}
\address{Department of Mathematics and Statistics, University of Jyv\"askyl\"a, PO Box 35, 40014 Jyv\"askyl\"a, Finland}
\email{mikko.j.salo@jyu.fi}



\begin{abstract} 
We study an inverse problem for the viscoacoustic wave equation, an integro-differential model describing wave propagation in viscoacoustic media with memory in the leading order term. The medium is characterized by a spatially varying sound speed and a space-time dependent memory kernel. Assuming that waves are generated by sources supported outside the region of interest, we consider exterior measurements encoded by the source-to-solution map. To study this inverse problem, we construct solutions concentrating near fixed geodesics and establish a corresponding propagation of singularities result for the semiclassical wave front set. These results are valid without any restriction on the underlying sound speed. Then, under certain geometric conditions, we prove that the exterior data uniquely determine not just the sound speed inside the domain but also all time derivatives at zero of the memory kernel. This involves a reduction to the lens rigidity and geodesic ray transform inverse problems. 
As an application, we establish uniqueness for the recovery of variable parameters in the extended Maxwell model.

\medskip\medskip

\noindent
Nous étudions un problème inverse pour l’équation des ondes viscoacoustiques, un modèle intégro-différentiel décrivant la propagation des ondes dans des milieux viscoacoustiques avec mémoire apparaissant dans le terme d’ordre principal. Le milieu est caractérisé par une vitesse du son dépendant de la variable spatiale et par un noyau de mémoire dépendant de l’espace et du temps. En supposant que les ondes sont engendrées par des sources supportées à l’extérieur de la région d’intérêt, nous considérons des mesures extérieures encodées par l’opérateur source-solution. Pour l’étude de ce problème inverse, nous construisons des solutions se concentrant au voisinage de géodésiques fixées et établissons des résultat de propagation des singularités pour le front d’onde semi-classique associé. Ces résultats sont obtenus sans hypothèse restrictive sur la vitesse du son sous-jacente. Sous certaines conditions géométriques, nous démontrons ensuite que les données extérieures déterminent de manière unique non seulement la vitesse du son à l’intérieur du domaine, mais également toutes les dérivées temporelles en zéro du noyau de mémoire. La démonstration repose sur une réduction aux problèmes inverses de rigidité des lentilles et de la transformée des rayons géodésiques. En application, nous établissons un résultat d’unicité pour la reconstruction de paramètres variables dans le modèle de Maxwell étendu.
\end{abstract}

\maketitle

\tableofcontents

\section{Introduction}

In this article we study an inverse problem in viscoacoustics. This can be viewed as a scalar model for viscoelastic media. Our methods would likely work also for the viscoelastic equation, but for simplicity we focus our attention to the scalar case in this article.

Let $\Omega\subset\mathbb R^n$, $n\geq 2$, be an open bounded set with smooth boundary, and let $T> 0$. The set $\Omega$ represents a viscoacoustic medium within which acoustic waves originating from $\Omega_e:=\mathbb R^n\setminus\Omega$ propagate during the time interval $[0,T]$. The medium is characterized by two physical quantities: the wave speed $c$, which is independent of time, and the memory kernel $G$, which is a function of both space and time. We make the following geometric assumptions:
\begin{enumerate}[label=(A\arabic*)]
    \item The manifold $(\ol{\Omega}, c(x)^{-2}g_0)$, where $g_0$ represents the Euclidean metric, is strictly convex;
    \item If $n=2$, we require that  $(\ol{\Omega}, c(x)^{-2}g_0)$ is a simple manifold;
    \item If $n\geq 3$, we require that $(\ol{\Omega}, c(x)^{-2}g_0)$ is foliated by strictly convex hypersurfaces, in the sense of \cite{SUV15}.
\end{enumerate}
 The pressure $u$ evolves according to the viscoacoustic equation
\begin{equation}\label{viscoacoustic-equation}
    Pu := -\frac 12\left(\p_t^2 u - \nabla \cdot (c(x) \nabla u) + \int_0^t \nabla \cdot (G(x,t-s) \nabla u(x,s)) \,ds\right)=0,
\end{equation}
whose derivation is recalled in Appendix \ref{sec_appendix}. The operator $P$ consists of a purely elastic part $Q$ (the wave operator), and an integro-differential operator $R$ (the memory operator):
$$Q := -\frac 12(\p_t^2 - \nabla \cdot c(x) \nabla), \qquad Ru := -\frac 12\int_0^t \nabla \cdot (G(x,t-s) \nabla u(x,s)) \,ds.$$
The second part in particular originates from the assumption that the stress within the medium at the spacetime point $(x,t)$ depends on the whole history of the quantity $\nabla u$ at $x$, that is on $\nabla u(x,s)$ for all $s\in[0,t]$. Thus, $R$ accounts for the viscous behaviour of the medium. 
\begin{center}
\begin{figure}[t]
 \begin{tikzpicture}

  \draw[thick] (0,1) ellipse (2 and 0.6); 
  \draw[thick] (-2,1) -- (-2,-5);          
  \draw[thick] (2,1) -- (2,-5);            
  \draw[thick] (0,-5) ellipse (2 and 0.6); 

  \draw[thick] (0,0) ellipse (1 and 0.3); 
  \draw[thick] (-1,0) -- (-1,-4);                      
  \draw[thick] (1,0) -- (1,-4);                        
  \draw[thick] (0,-4) ellipse (1 and 0.3);             

  \draw[dashed] (2,-5) -- (2.5,-5);
  \draw[dashed] (2,1) -- (2.5,1);
  \draw[dashed] (1,0) -- (2.5,0);
  \draw[dashed] (1,-4) -- (2.5,-4);

  \draw[blue,thick] (-1.5,-3.5) ellipse (0.25 and 0.35); 
  \fill[color=blue, pattern=north west lines] (-1.5,-3.5) ellipse (0.25 and 0.35);
  \draw[red,thick]
    (-1.5,-3.5) .. controls (0,-1) and (1,-3) .. (1.5,1);

  \node at (0,-5) {$\Omega'$};
  \node at (0,-4) {$\Omega$};
  \node at (2.3,-2.5) {$M'$};
  \node at (1.25,-2.5) {$M$};
  \node at (-1.8,-4.05) {{\color{blue}$f$}};
  \node at (2.85,-5) {$-S'$};
  \node at (2.75,1) {$T'$};
  \node at (2.75,0) {$T$};
  \node at (2.75,-4) {$0$};

\end{tikzpicture}
\end{figure}
\end{center}
\vspace{-5mm}

We want to consider waves originating from sources $f$ supported in the exterior. To this end, let $M:= [0,T]\times\Omega$ and $M':=[-S', T']\times\Omega'$ be spacetime cylinders such that $M\Subset M'$, and consider $f\in C^\infty_c(M'\setminus M)$. We prove in Proposition \ref{prop:well-posedness} that, under convenient assumptions for the functions $c$ and $G$, the problem
\begin{equation*}
     \begin{cases}
         Pu = f, & \mbox{ in } M', \\
         u(-S') = \dot u(-S') = 0, & \mbox{ in } \Omega'
     \end{cases}
 \end{equation*}
 is well-posed in $L^2([-S',T'], H^1_0(\Omega')) \cap H^1([-S',T'], L^2(\Omega'))$ for all sources $f\in L^2([-S',T'], H^{-1}(\Omega'))$. This allows us to define the source-to-solution map 
 $$\mathcal S: f \mapsto u|_{M'\setminus M},$$
 associating exterior sources to the waves they generate, as observed in the exterior of $\Omega$. In this framework, we ask the following inverse problem: 

 \emph{Are the wave speed $c$ and the memory kernel $G$ within $\Omega$ uniquely determined by exterior data in the form of the source-to-solution map $\mathcal S$?}

 Several results for this inverse problem were proved by Romanov and collaborators, see \cite{Rom-article, Romanov} and references therein. The result in \cite{Rom-article} states roughly that if the manifold $(\ol{\Omega}, c(x)^{-2} g_0)$ is simple, then $c$ and all derivatives $\p_t^j G(x,0)$ for $j \geq 0$ are determined by the source-to-solution map $\mathcal S$ when the sources are delta functions at points $y \in \p \Omega$ at time $t=0$. The proof is based on writing a singularity expansion for solutions generated by a delta source. From the boundary measurements of solutions one can read off the travel times of waves between boundary points (i.e.\ the boundary distance function) and certain integrals over geodesics involving the quantities $\p_t^j G(x,0)$. By invoking the solution of the boundary rigidity problem and invertibility of geodesic X-ray transforms on simple manifolds, one can determine $c$ and $\p_t^j G(x,0)$ in $\Omega$.

 We will consider the above inverse problem in cases where the wave speed may not be simple. Then there may be caustics, which are generically present in the seismic applications that we have in mind. In the presence of caustics, the conormal structure of the solution corresponding to a delta source exploited in \cite{Rom-article} breaks down, and in general it is not possible to read off the required information from such boundary data. We will take a more microlocal approach and employ oscillatory sources that generate Gaussian beam type solutions concentrating along a fixed null bicharacteristic. This requires a propagation result for wave-type singularities of the viscoacoustic equation. We establish such a result for the semiclassical wave front set, which is the appropriate setting for oscillatory solutions. It is interesting that the memory term, even though it contains second order derivatives, turns out not to affect the propagation of wave-type singularities. The effect of the memory term is visible only in the amplitudes, obtained via solving both transport and Volterra equations, of the Gaussian beam solutions.
 
 Solutions of the viscoacoustic equation can also have stationary singularities which do not move (see Remark \ref{stationary-sing} for a discussion). However, we only use wave-type singularities in the solution of the inverse problem, since these are the singularities that travel through the unknown part of medium and collect information on the coefficients there.

\subsection{Main results}
 We begin with the construction of  special approximate solutions to the viscoacoustic equation, called \emph{Gaussian beam quasimodes} given as superpositions of wave packets as in \cite{OSSU}. We have obtained the following result:

\begin{Theorem}\label{GO-construction-general}
     Let $\Omega \Subset\Omega'\subset\mathbb R^n$, $n\geq 2$, be open bounded sets, and assume $T>0$. Define the spacetime cylinders $M:= [0,T]\times\overline\Omega$ and $Z:=[0, T]\times\overline\Omega'$. Let $c \in C^\infty(\ol{\Omega}')$ be a positive function and consider a bicharacteristic $\gamma(\sigma) = (z(\sigma),\zeta(\sigma))$, $\sigma\in[0,T]$, of the wave operator $Q$, where $z(\sigma) = (x(\sigma), \sigma)$, $x(\sigma)$ is a geodesic such that $x([0,T]) \subset \ol{\Omega}'$ and $x(0), x(T)\in \Omega'\setminus\overline\Omega$, and $\zeta(\sigma) = (\xi(\sigma), 1)$.
     

    
Then for any $N\in\mathbb N$ there exists a quasimode $u\in C^\infty_c(Z)$ of the form 
$$u(x,t)=\sum_{l=0}^{N-1} (-ik)^{-l}\int_0^T\bigg( e^{ik\varphi(x,t,\sigma)}a'_l(x,t,\sigma) + (-ik)^{-1} e^{ik\varphi(x,0,\sigma)}a''_l(x,t,\sigma)\bigg)d\sigma$$
such that 
\begin{equation} \label{pu_formula}
Pu = (-ik) (e^{ik\varphi(z,0)} a'(z,0)-e^{ik\varphi(z,T)} a'(z,T)) + r_N
\end{equation}
in $Z$, and $r_N \in L^2((0,T),L^2(\Omega'))$ satisfies
\[
\norm{r_N}_{L^2((0,T),L^2(\Omega'))} = O(k^{-N}) \text{ as $k \to \infty$}.
\]
The functions $\varphi, a'_l, a''_l \in C^{\infty}(Z\times [0,T])$ are smooth and independent of $k$, and respectively solve suitable eikonal, transport, and Volterra equations. In particular, the complex phase function $\varphi$ satisfies 
\[
\varphi(z(\sigma),\sigma) = 0, \qquad \nabla_z \varphi(z(\sigma), \sigma) = \zeta(\sigma), \qquad \p_{z_j z_k} \varphi(z(\sigma), \sigma) = H_{jk}(\sigma)
\]
where $H(\sigma)$ solves the matrix Riccati equation \eqref{matrix-riccati} with $\mathrm{Im}(H(\sigma))$ positive definite, and the principal amplitude $a_0'$ solves the transport equation
\[
-\p_{\sigma}(a_0'(z(\sigma),\sigma)) + b(z(\sigma),\sigma) a_0'(z(\sigma),\sigma) = 0
\]
where $b(z,\sigma) := 2Q\varphi(z,\sigma) + \frac{G(x,0)|\nabla\varphi(z,\sigma)|^2}{\p_t\varphi(z,\sigma)}$.
\end{Theorem}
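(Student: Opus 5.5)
The construction follows the wave-packet superposition of \cite{OSSU}: for each $\sigma\in[0,T]$ we build a Gaussian beam piece $e^{ik\varphi(z,\sigma)}a'(z,\sigma)$ concentrated near $z(\sigma)$. The dependence on $\sigma$ is then integrated out, and the boundary terms in \eqref{pu_formula} arise from integrating by parts in $\sigma$.

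\textbf{Step 1: the phase.} We choose the phase so that $\sigma\mapsto e^{ik\varphi(\cdot,\sigma)}$ is transported along the bicharacteristic. Concretely, $\varphi$ should satisfy the eikonal-type condition
$$q(z,\nabla_z\varphi)+\p_\sigma\varphi\,\p_t\varphi=0$$
to infinite order on the curve $\{(z(\sigma),\sigma)\}$. Here $q$ is the principal symbol of $Q$; one could equally use the normalization $\p_\sigma\varphi = -\tfrac12\,\mathrm{Hess}$-type relations of \cite{OSSU}. We prescribe $\varphi(z(\sigma),\sigma)=0$, $\nabla_z\varphi=\zeta(\sigma)$ and $\p^2_z\varphi=H(\sigma)$, where $H$ solves the Riccati equation \eqref{matrix-riccati} with $\mathrm{Im}\,H(0)>0$; standard Riccati theory preserves positivity of $\mathrm{Im}\,H$. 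Higher Taylor coefficients are then determined recursively. Since $\mathrm{Im}\,\varphi\ge c|z-z(\sigma)|^2$ near the curve, we may multiply by a cutoff supported in a small tube around $z(\sigma)$. This gives compact support in $Z$, because $x(\sigma)\in\ol\Omega'$.

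\textbf{Step 2: applying $P$.} Next we compute $P(e^{ik\varphi}a)$. The term $Q$ gives the usual expansion $k^2(\ldots)+k\,(2\nabla\varphi\cdot\nabla a\text{-type transport}+aQ\varphi)+Qa$. For the memory term $R$ we integrate by parts in $s$:
$$\int_0^t G(x,t-s)\,e^{ik\varphi(x,s)}\,ds=\frac{G(x,0)e^{ik\varphi(x,t)}}{ik\,\p_t\varphi}-\frac{G(x,t)e^{ik\varphi(x,0)}}{ik\,\p_t\varphi(x,0)}+\dots$$
This is legitimate because $\p_t\varphi\approx 1\neq0$ near the beam. Repeating the integration by parts yields an asymptotic series. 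Since $R$ has two derivatives acting on $e^{ik\varphi}$, it contributes $O(k^2)\cdot k^{-1}=O(k)$. In particular, the memory term does not alter the eikonal equation; it only enters the transport equation through the coefficient $\frac{G(x,0)|\nabla\varphi|^2}{\p_t\varphi}$, which explains the form of $b$.

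The endpoint contributions at $s=0$ carry the phase $e^{ik\varphi(x,0,\sigma)}$. These are cancelled by the correctors $(-ik)^{-1}e^{ik\varphi(x,0,\sigma)}a''_l$. Because the phase is frozen at time $0$, applying $P$ to such a term produces a Volterra-type equation in $t$ for $a''_l$, of the form $\p_t^2 a''+\int_0^t\nabla\cdot(G\nabla\cdot)\ldots=\text{source}$ at each order of $k$. This equation is solvable on $[0,T]$ by standard Volterra theory.

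\textbf{Step 3: the $\sigma$-integration.} The term involving $\p_\sigma\varphi$ arises from writing $k^2 q(\nabla\varphi)e^{ik\varphi}=-ik\,\p_\sigma(e^{ik\varphi})\cdot(\ldots)$, following the OSSU identity, and integrating by parts in $\sigma$. This produces the boundary term $(-ik)\bigl(e^{ik\varphi(z,0)}a'(z,0)-e^{ik\varphi(z,T)}a'(z,T)\bigr)$. After this, the $O(k)$ coefficients reduce to $-\p_\sigma a'_0+b\,a'_0$, and requiring this to vanish to high order on the curve gives the stated transport equation. The lower-order amplitudes $a'_l$ solve inhomogeneous transport equations along the curve.

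\textbf{Step 4: the remainder.} The remainder consists of terms vanishing to order $m$ at $z(\sigma)$ multiplied by $e^{ik\varphi}$. These have size $O(k^{-m/2})$ in $L^2$, since $\mathrm{Im}\,\varphi\gtrsim|z-z(\sigma)|^2$, together with explicit $k^{-N}$ terms. Cutoff errors are $O(k^{-\infty})$. Choosing the vanishing orders large enough gives $\norm{r_N}=O(k^{-N})$.

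The main obstacle is the bookkeeping for the memory operator. The repeated integration by parts in $s$ must be controlled, and the $e^{ik\varphi(x,0,\sigma)}$ boundary terms require correctors whose own memory terms regenerate further terms. The correctors themselves are not concentrated near the curve in $t$, so showing that the Volterra system closes with $O(k^{-N})$ error is the step I expect to need the most care.
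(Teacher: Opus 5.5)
You have the paper's architecture: the OSSU superposition, a complex phase with $\mathrm{Im}\,H>0$, integration by parts in $s$ showing the memory term is $O(k)$ on the beam, correctors carrying the frozen phase $e^{ik\varphi(x,0,\sigma)}$, and integration by parts in $\sigma$ for the endpoint terms. Your Step 4 remainder estimate is also fine. However, two steps are not right as written, and the second is the part of the theorem that goes beyond a standard Gaussian beam.

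\textbf{The phase condition.} The memory term never reaches order $k^2$, so nothing justifies the factor $\p_t\varphi$ in $q(z,\nabla_z\varphi)+\p_\sigma\varphi\,\p_t\varphi=0$. The condition that yields the statement is $q(z,\nabla_z\varphi)+\p_\sigma\varphi=0$ to infinite order on the curve. The two agree only to first order at $z(\sigma)$, where $\p_\sigma\varphi=0$ and $\p_t\varphi=1$.
\begin{itemize}
\item At second order your version adds $\p_z\p_\sigma\varphi\otimes\p_z\p_t\varphi$ plus its transpose. This is nonzero, since $\p_z\p_t\varphi=He_t$ and $\mathrm{Im}\,H>0$. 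So $H$ no longer solves \eqref{matrix-riccati}.
\item You get $k^2q\,e^{ik\varphi}=ik\,\p_t\varphi\,\p_\sigma(e^{ik\varphi})+\dots$. The $\sigma$-integration by parts then produces endpoint terms $e^{ik\varphi(z,\sigma)}\p_t\varphi(z,\sigma)a'(z,\sigma)$ at $\sigma=0,T$, and an extra term $\p_\sigma\p_t\varphi\,a'_0$ in the transport equation.
\item Since $\p_t\varphi(z,0)-1$ vanishes only to first order at $z(0)$, the result is not \eqref{pu_formula} modulo $O(k^{-N})$.
\end{itemize}
Drop the factor.

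\textbf{The corrector step.} You leave this step open, and you misdescribe it: the equation for $a''_l$ is not of the form $\p_t^2a''+\int_0^t\nabla\cdot(G\nabla\,\cdot)\cdots$. The missing idea is an order count.
\begin{itemize}
\item Since $\varphi(x,0,\sigma)$ is independent of $t$, $\p_t$ produces no power of $k$ on these terms. The phase also factors out of the $s$-integral in $R$, so there is no integration by parts and the memory term stays at top order. Each $\nabla_x$ produces $ik\nabla_x\varphi(x,0,\sigma)$.
\item Hence the top order of $P\bigl((-ik)^{-1}e^{ik\varphi(x,0,\sigma)}a''_l\bigr)$ is a constant times $k\,e^{ik\varphi(x,0,\sigma)}\,\nabla_x\varphi\cdot\nabla_x\varphi\,\bigl(c\,a''_l(t)-\int_0^tG(x,t-s)a''_l(s)\,ds\bigr)$. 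This matches the $O(k)$ endpoint terms at $s=0$ produced by the $a'$ memory terms.
\item The sources live only where $(x,0)$ lies in the tube around $z(\sigma)$, so there $\nabla_x\varphi\approx\xi(0)\neq0$. Dividing by $c\,\nabla_x\varphi\cdot\nabla_x\varphi$ gives a second-kind Volterra equation with kernel $c^{-1}G$. It is solved exactly in $t$, for each $(x,\sigma)$, by the resolvent series; this is the operator $V$.
\item The terms $\p_t^2a''_l$, the first-order terms in $\nabla_xa''_l$ and the remaining memory pieces enter one or two orders lower, as sources for $a''_{l+1}$ and $a''_{l+2}$.
\end{itemize}
Require the coefficients of $e^{ik\varphi(z,\sigma)}$ and of $e^{ik\varphi(x,0,\sigma)}$ to vanish separately. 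Then the system is triangular: the $a'_l$ depend only on $a'_j$ with $j<l$ and never see the correctors, while $a''_l$ depends on $a'_j$ with $j\le l$ and $a''_j$ with $j<l$. So the recursion $a'_0,a''_0,a'_1,a''_1,\dots$ closes. The correctors' lack of concentration in $t$ then costs nothing. They solve their equations exactly and $|e^{ik\varphi(x,0,\sigma)}|\le1$ on their support, so they contribute only the $O(k^{-N})$ truncation term to $r_N$.
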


The equation \eqref{pu_formula} states that $Pu$ is $O(k^{-N})$ away from the endpoints $z(0)$ and $z(T)$. In particular $Pu$ is an approximate solutions (with error $O(k^{-N})$) near the smaller cylinder $[0,T] \times \ol{\Omega}$. If there is no memory term, the $a''$ terms in Theorem \ref{GO-construction-general} are zero and $u$ resembles a standard Gaussian beam quasimode for the wave equation (see e.g.\ \cite{KKL01}). However, the presence of a memory term requires us to include the $a''$ amplitudes, and these are indeed obtained via solving Volterra equations. Moreover, as shown in Proposition \ref{geom-optics}, if the sound speed is known to be simple, an easier \emph{geometrical optics} construction with real phase functions is sufficient.

With the approximate solutions at hand, we can prove a propagation of singularities result for the semiclassical wave front set of solutions. This will make use of some concepts from semiclassical analysis, which we will rigorously define in Section 2 before the proof of Theorem \ref{propagation} (see also \cite{Zworski,OSSU}). In our case, the small semiclassical parameter $h$ will be $h:=k^{-1}$. The following propagation of singularities result will be used when solving the viscoacoustic inverse problem.

\begin{Theorem}\label{propagation}
    Let $u$ be an $L^2$-tempered solution of $Pu=0$ in $Z$. Consider the Hamiltonian flow $(x(\sigma),t_0+\sigma, \xi(\sigma),1)$ of the principal symbol of $P$ through $(x_0,t_0,\xi_0,1)$, where $x(\sigma)$ is a geodesic and $\xi(\sigma)$ is the parallel transport of $\xi_0$ along the geodesic. Then $(x(\sigma),t_0+\sigma, \xi(\sigma),1)\in WF_{scl}(u)$ if and only if $(x_0,t_0,\xi_0,1)\in WF_{scl}(u)$. 
\end{Theorem}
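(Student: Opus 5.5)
We prove the statement by duality, pairing $u$ against the Gaussian beam quasimodes of Theorem \ref{GO-construction-general}, in the spirit of \cite{OSSU}. Since the flow is reversible (running the bicharacteristic backwards gives a bicharacteristic of the same form), it suffices to prove one implication. We show: if $(x_0,t_0,\xi_0,1)\notin WF_{scl}(u)$, then $(x(\sigma_1),t_0+\sigma_1,\xi(\sigma_1),1)\notin WF_{scl}(u)$ for each $\sigma_1$ in the range where the flow stays in $Z$. The converse then follows by applying this to the reversed flow.

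The quasimode must be built for the formal adjoint $P^*$, since we pair against $u$. Here $P^*$ is a wave operator plus an anticausal memory term $-\tfrac12\int_t^{T}\nabla\cdot(G(x,s-t)\nabla v(x,s))\,ds$. The construction of Theorem \ref{GO-construction-general} applies verbatim with time reversed. The principal symbol is unchanged, so the phase $\varphi$, the Riccati equation and the beam geometry are the same. Only the transport coefficient $b$ and the Volterra equations for the $a''_l$ change.

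Fix a short segment $\sigma\in[0,\sigma_1]$ of the bicharacteristic through $(x_0,t_0,\xi_0,1)$, slightly extended on both ends. Build a quasimode $v_k$ for $P^*$ whose beam integral runs over this segment. By \eqref{pu_formula} (for $P^*$),
\[
P^*v_k = (-ik)\bigl(e^{ik\varphi(\cdot,0)}a'(\cdot,0)-e^{ik\varphi(\cdot,\sigma_1)}a'(\cdot,\sigma_1)\bigr)+r_N .
\]
The two boundary terms are Gaussian wave packets. One is concentrated at $(x_0,t_0)$ with frequency $k(\xi_0,1)$, the other at the endpoint with frequency $k(\xi(\sigma_1),1)$. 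Both have amplitude $a'_0$, which is nonvanishing along the curve because it solves the transport ODE of Theorem \ref{GO-construction-general} with nonzero initial data.

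Take a cutoff $\chi$ with $\chi u$ equal to $u$ near the segment. Pairing $0=(Pu,v_k)$, or more carefully $(P(\chi u),v_k)=([P,\chi]u,v_k)$, and integrating by parts gives
\[
k\,(u, e^{ik\varphi(\cdot,\sigma_1)}a'(\cdot,\sigma_1)) = k\,(u, e^{ik\varphi(\cdot,0)}a'(\cdot,0)) + O(k^{-N}\|u\|) + \text{commutator terms}.
\]
Note that $\chi$ is supported away from the initial/final time boundary, so the Volterra integrals produce no boundary terms. The commutator terms are supported where $\nabla\chi\neq0$, which lies away from the beam, and there $v_k$ is $O(k^{-\infty})$ thanks to $\mathrm{Im}\,H>0$. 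Since $u$ is $L^2$-tempered, $\|u\|=O(k^{M})$, so choosing $N>M+m$ makes these errors $O(k^{-m})$.

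Next we use the characterization of $WF_{scl}$ through wave-packet (FBI/Gaussian) pairings. A point $(z_0,\zeta_0)$ lies outside $WF_{scl}(u)$ if and only if the pairings of $u$ with Gaussian packets centred near $(z_0,\zeta_0)$ are $O(k^{-\infty})$, uniformly in a neighbourhood. The hypothesis makes the right-hand side $O(k^{-\infty})$, uniformly under small perturbations of the starting point and covector, and smooth dependence of the beam on its initial data gives the needed uniformity. Hence the packet pairings at the endpoint decay rapidly, which is the conclusion.

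The main obstacle is this last identification. One must show that the Gaussian packets $e^{ik\varphi(\cdot,\sigma)}a'(\cdot,\sigma)$, whose complex Hessian $H(\sigma)$ is dictated by the Riccati flow rather than chosen freely, still characterize $WF_{scl}$. To handle this, we use the freedom in the initial Hessian $H(0)$ and amplitude, and invoke the fact that FBI transforms with any Gaussian window of positive definite imaginary part are equivalent for detecting $WF_{scl}$. A second point needing care is that the memory term, although second order, contributes only at lower order in $k$ along the beam. This is exactly what the $a''_l$ Volterra amplitudes absorb in Theorem \ref{GO-construction-general}, and we need it to hold equally for $P^*$.
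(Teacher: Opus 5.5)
Your overall route is the paper's. You pair $u$ with a quasimode for $P^*$ built along the segment, use $Pu=0$ to trade the packet at one end for the packet at the other, and conclude with the wave-packet characterization of $\WF_{scl}$ from \cite[Proposition 2.3]{OSSU}, uniformly in the bicharacteristic. However, one step fails as written: the cutoff argument. You take $\chi(x,t)$ supported away from the initial and final times, i.e.\ localized in time near the segment. For such a cutoff, the commutator with the memory term is \emph{not} supported in $\operatorname{supp}\nabla\chi$, because $R$ is nonlocal in time. At a point $(x,t)$ near the beam where $\chi(\cdot,t)\equiv 1$ near $x$, one has
\[
[R,\chi]u(x,t) = -\tfrac12\int_0^t \nabla\cdot\Big(G(x,t-s)\,\nabla\big((\chi(x,s)-1)\,u(x,s)\big)\Big)\,ds .
\]
This involves $u$ at earlier times where $\chi\neq 1$, and it is in general nonzero exactly where $v_k$ has full size. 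So the concentration of $v_k$ near the beam gives no control of $([P,\chi]u,v_k)$. This is precisely where the memory term differs from a differential operator, and the proposal passes over it.

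The repair is easy. One option is a purely spatial cutoff $\chi=\chi(x)$. Then $[R,\chi]u=-\tfrac12\int_0^t\big(\nabla\chi\cdot G(x,t-s)\nabla u(x,s)+\nabla\cdot(G(x,t-s)u(x,s)\nabla\chi)\big)\,ds$ really is supported in $\operatorname{supp}\nabla\chi\times[0,T]$, away from the spatial projection of the beam. No boundary terms in $t$ arise if the segment lies strictly inside the time interval, so that $v_k$ vanishes near the initial and final times. The other option is the paper's: drop the cutoff and use $(u,P^*\widetilde v)=(Pu,\widetilde v)=0$ on the whole cylinder. This is exact because $\widetilde v$ is compactly supported and $P^*$, with its anticausal memory, is the adjoint over the full time interval. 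In both versions the remainder term is paired with $u$ over the whole time slab, since the memory couples all earlier times. You therefore need temperedness of $u$ there, not just near the segment. With this fixed your argument goes through. Your two additional remarks also address points the paper passes over. First, $P^*$ carries an anticausal memory term, so Theorem \ref{GO-construction-general} has to be rerun in reversed time. Second, the packet at the hypothesis end has a Riccati-determined Hessian rather than $iI$. The paper's $\widetilde b(\cdot,0)$ contains the $k$-dependent factor $e^{ik(\widetilde\varphi(\cdot,0)-\Psi(\cdot;\widetilde\gamma(0)))}$, so it tacitly needs the same equivalence of Gaussian windows that you invoke.
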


Note that the above theorem describes the singularities at points $(x,t,\xi,\tau)$ when $\tau \neq 0$. Theorem \ref{propagation} shows that the memory term does not affect the propagation of such singularities, and they behave exactly like those for the standard wave equation. This is due to the fact that the memory term $R$ is of lower order (with respect to the parameter $k$) than the wave term $Q$. Due to an integration by parts in time, we obtain $Ru = O(k)$, while of course $Qu = O(k^2)$. As mentioned above, solutions of the viscoacoustic equation can also have stationary singularities, which correspond to the case where $\tau=0$ (see Remark \ref{stationary-sing}). We will not study these singularities since they are not needed in the inverse problem.

Using propagating singularities as for the wave equation, we can recover the scattering relation and the travel time (the so called \emph{lens data}) from the source-to-solution data. This is the content of Proposition \ref{prop:from-cauchy-data-toscattering-relation}. The sound speed can then itself be recovered from the lens data under suitable geometric assumptions. In dimension $n=2$, it is a classical result by Muhometov \cite{Muh77}, see also \cite{Cro91, PSU23}, that a simple metric can be recovered within a given conformal class from the lens data. In dimension $n\geq 3$, Stefanov, Uhlmann and Vasy have obtained that the lens data suffices in order to recover a metric within a given conformal class, assuming that the manifold is foliated by strictly convex hypersurfaces \cite{SUV15}. Both the $n=2$ and the $n\geq 3$ cases further assume that the manifold $(\Omega, c(x)^{-2}g_0)$ is strictly convex, where $g_0$ represents the Euclidean metric. The use of the cited lens rigidity results, and related invertibility results for the geodesic X-ray transform, is the reason for the geometric assumptions (A1)-(A3) appearing in our main theorems. 

Finally, we are able to achieve unique recovery for the time derivatives of the memory kernel at time $t=0$ by means of Proposition \ref{formula}, which relates the values of the quasimode along a fixed null bicharacteristic with the geodesic X-ray transform of a quantity depending on the memory kernel. The derivatives of $G$ of all orders can be obtained by repeating this argument recursively. These results are contained in the following theorem:

 \begin{Theorem}\label{main}
     Let $\Omega \Subset\Omega'\subset\mathbb R^n$, $n\geq 2$, be open bounded sets with smooth boundary, and assume $-S' < 0 < T < T'$. Define the spacetime cylinders $M:= [0,T]\times\overline\Omega$ and $M':=[-S', T']\times\overline\Omega'$. Let $c_1, c_2 \in C^\infty(\mathbb R^n)$ be positive functions verifying $c_1=c_2$ in $\overline\Omega'\setminus\Omega$ and the geometric conditions (A1)-(A3). Moreover, let $G_1, G_2\in C^\infty(M)$ be memory kernels verifying $G_1=G_2$ in $\overline\Omega'\setminus\Omega$, and consider the source-to-solution data $S_1, S_2$ corresponding to $c_1, G_1$ and $c_2, G_2$ respectively. Assume that the source-to-solution data coincide, i.e.
    $$ \mathcal S_1(f) = \mathcal S_2(f), \qquad \mbox{for all } f\in C^\infty_c(M'\setminus M). $$
    Then $c_1=c_2$ and $\p^j_t G_1|_{t=0} = \p^j_t G_2|_{t=0}$ for all $j\in\mathbb N$.
\end{Theorem}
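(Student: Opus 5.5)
The plan has three stages: recover the sound speed, then $G(\cdot,0)$, then the higher time derivatives by induction.

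\textbf{Stage 1: recovering $c$.} First I would show that equal source-to-solution maps give equal lens data. Take a null bicharacteristic entering $M$ from the exterior region $M'\setminus M$. Theorem \ref{GO-construction-general} gives a Gaussian beam quasimode concentrated along it. Cut it off so that the source $f=\chi Pu$ is supported in $M'\setminus M$ near the initial point, and take the true solution with source $f$. Since $c_1=c_2$ outside $\Omega$, the beams coincide there. By Theorem \ref{propagation}, the semiclassical wave front set of the true solution exits $\Omega$ exactly along the continuation of the bicharacteristic for $c_j$. Reading off $\WF_{scl}$ of $\mathcal S_j(f)$ in the exterior then determines the exit point, exit direction and travel time, since the time variable serves as arclength for the metric $c^{-2}g_0$. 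This is Proposition \ref{prop:from-cauchy-data-toscattering-relation}. Under (A1)--(A3), the lens rigidity results (Muhometov for $n=2$; Stefanov--Uhlmann--Vasy for $n\ge3$, within the conformal class) then give $c_1=c_2$ in $\Omega$.

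\textbf{Stage 2: recovering $G(\cdot,0)$.} With $c$ now common to both problems, the phase $\varphi$ and the Riccati solution $H$ are the same for both. I would compare the solutions $u_j$ for the same source $f$. Let $w=u_1-u_2$. It vanishes outside $M$ by assumption, and by the equal-data property we have $P_1 w=(P_2-P_1)u_2=R_{G_2-G_1}u_2$, supported in $M$. Pairing with a quasimode $v$ for the adjoint (time-reversed) problem along the same geodesic and integrating by parts gives
\[
\int_M R_{G_1-G_2}u_2\,\bar v=0,
\]
since all boundary terms lie in the exterior where $w=0$. I would then insert the beam asymptotics. The leading contribution of $R$ after the time integration by parts is $O(k)$ with coefficient $G(x,0)|\nabla\varphi|^2/\partial_t\varphi$. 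Stationary phase transversal to the geodesic then yields the ray transform along $x(\sigma)$ of $(G_1-G_2)(x,0)$, weighted by an amplitude factor $e^{\int b}$ depending on $G_j(\cdot,0)$ itself. Proposition \ref{formula} is what expresses this. To avoid the nonlinear weight, I would use the alternative form: the amplitude of the exiting beam is determined by the exterior data, and comparing the transport equations gives
\[
\exp\Big(\int (G_1-G_2)(x(\sigma),0)\,|\xi|^2\,d\sigma\Big)=1
\]
up to normalisation. Hence the geodesic ray transform of $G_1(\cdot,0)-G_2(\cdot,0)$ vanishes. Injectivity of the ray transform under (A2)/(A3) (simple surfaces; Uhlmann--Vasy local injectivity with a convex foliation) gives $G_1(\cdot,0)=G_2(\cdot,0)$.

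\textbf{Stage 3: induction on $j$.} Suppose $\p_t^iG_1(\cdot,0)=\p_t^iG_2(\cdot,0)$ for all $i<j$. The next-order amplitudes $a_l''$ and $a_l'$ solve the Volterra and transport equations. Expanding $\int_0^t G(x,t-s)\dots\,ds$ by repeated integrations by parts in $s$ produces the terms $\p_t^iG(x,0)(ik\p_t\varphi)^{-i-1}$. At order $k^{-j}$ in the exterior amplitude, the first place the two problems differ is a transport-equation source term linear in $\p_t^j(G_1-G_2)(x,0)$ along the geodesic. All other terms coincide by the induction hypothesis. Reading this coefficient from $\mathcal S$ again yields a vanishing ray transform of $\p_t^j(G_1-G_2)(\cdot,0)$ times a known nonvanishing weight ($|\xi|^2$ with powers of $\p_t\varphi=1$ on the curve). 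Injectivity then gives $\p_t^jG_1(\cdot,0)=\p_t^jG_2(\cdot,0)$.

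\textbf{Main obstacle.} I expect the hardest step to be Stage 3. One must verify rigorously that the order-$k^{-j}$ amplitude of the exiting beam is measurable from $\mathcal S$ modulo $O(k^{-j-1})$ errors. This requires the $O(k^{-N})$ remainder control of Theorem \ref{GO-construction-general} and well-posedness estimates in Proposition \ref{prop:well-posedness}, which must be uniform in $k$. One must also check that the new $\p_t^jG$ term appears with a nondegenerate weight and is not cancelled by the Volterra corrections $a''$. My first approach would be to track the Volterra contributions explicitly along the curve, where $\varphi$ is real and $\p_t\varphi=1$.
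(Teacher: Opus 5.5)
Your proposal is correct and follows essentially the paper's route. You recover the lens data by propagation of singularities (Proposition~\ref{prop:from-cauchy-data-toscattering-relation}) and apply lens rigidity to get $c_1=c_2$. You then compare the amplitudes $a'_j$ of the exiting beam at an exterior point, as in Proposition~\ref{formula} and the paper's inductive $v_N$ argument, where $\p_t^jG(\cdot,0)$ first enters $a'_j$ linearly through the transport source term, and invert the geodesic ray transform at each step.

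One small point in your favour: with the transport equation as stated, the $G$-dependent part of the exponent, $(G_1-G_2)(x(\sigma),0)|\xi(\sigma)|^2$, is real on the curve. So your identity $\exp(\int\cdots)=1$ gives the vanishing ray transform directly, and the paper's integer-valued $\kappa$ continuity argument is not needed. Your worry about the Volterra terms $a''$ is settled as in the paper, since $e^{ik\varphi(x_0,0,\sigma)}$ is exponentially small at the exit point.
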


Indeed we prove that the time derivatives of all orders of the memory kernel at time $t=0$ and the sound speed are uniquely determined by the source-to-solution map. If $n=2$ this is analogous to the result in \cite{Rom-article} (which considered the viscoelastic case), but for $n \geq 3$ there are many non-simple geometries satisfying (A3) \cite{PSUZ19}. 

As an application of our results, in the last Section \ref{sec:emm} we present the so called extended Maxwell model for viscoacoustics, for which we show uniqueness in the recovery of the associated viscous and elastic parameters $\alpha,\beta$:

\begin{Theorem}\label{th-recovering-EMM}
    Let $\widetilde{G}_1(x,t)$ and $\widetilde{G}_2(x,t)$ have the form given in Section \ref{sec:emm}, and assume that the time derivatives of $\widetilde G_1(x,t)$ and $\widetilde G_2(x,t)$ at $t=0$ coincide for all orders $k\in\{0,...,2N-1\}$. Then $\widetilde{G}_1(x,t) = \widetilde{G}_2(x,t)$.
\end{Theorem}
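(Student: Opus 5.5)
We expect Section~\ref{sec:emm} to give the extended Maxwell kernel in the standard form of a finite sum of relaxation exponentials,
\[
\widetilde G_i(x,t)=\sum_{l=1}^{N}\beta^{(i)}_l(x)\,e^{-\alpha^{(i)}_l(x)t},\qquad i=1,2,
\]
with, at each $x$, pairwise distinct rates $\alpha^{(i)}_1(x),\dots,\alpha^{(i)}_N(x)$ and nonzero (physically positive) weights $\beta^{(i)}_l(x)$. The plan below rests on this form being the one used; if the model is written differently, it has to be brought to this shape first. The argument is then purely pointwise in $x$, and the spatial regularity of $\alpha,\beta$ plays no role. Fix $x$ and suppress it from the notation.

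\textbf{Step 1: reduce the data to a moment problem.} Differentiating gives
\[
\p_t^k\widetilde G_i(x,0)=\sum_{l=1}^N\beta^{(i)}_l\,\bigl(-\alpha^{(i)}_l\bigr)^k .
\]
So the hypothesis says exactly that the two discrete measures $\mu_i=\sum_l\beta^{(i)}_l\delta_{s^{(i)}_l}$, where $s^{(i)}_l:=-\alpha^{(i)}_l$, have equal moments of orders $k=0,\dots,2N-1$. This is the classical Prony (or Gauss quadrature) uniqueness situation.

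\textbf{Step 2: cancel with a Vandermonde system.} Let $\{\sigma_1,\dots,\sigma_m\}$ be the union of the node sets $\{s^{(1)}_l\}$ and $\{s^{(2)}_l\}$, so that $m\le 2N$. Let $c_j$ be the total weight of $\mu_1-\mu_2$ at $\sigma_j$. This is well defined because nodes are distinct within each model, and it correctly merges any node shared by the two models. Equality of moments gives
\[
\sum_{j=1}^m c_j\sigma_j^k=0,\qquad k=0,\dots,2N-1 .
\]
Keeping only the first $m\le 2N$ of these equations, the coefficient matrix is an $m\times m$ Vandermonde matrix in the distinct numbers $\sigma_j$. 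It is therefore invertible, so every $c_j=0$, that is, $\mu_1=\mu_2$ as measures.

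\textbf{Step 3: return to the kernels.} Since the $\beta^{(i)}_l$ are nonzero, the support of $\mu_i$ is exactly $\{s^{(i)}_l\}$. Hence $\mu_1=\mu_2$ forces the two rate sets to coincide, and the weights attached to each common rate to agree. It follows that $\widetilde G_1(x,\cdot)=\widetilde G_2(x,\cdot)$ for every $t$, which is the claim.

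\textbf{Main obstacle.} The only delicate point is bookkeeping when the two models share some rates, or when coefficients degenerate. Working with the merged node set in Step 2 handles shared rates automatically. If the model allowed $\beta_l=0$ or coinciding $\alpha_l$, the equality $\mu_1=\mu_2$ would still give $\widetilde G_1=\widetilde G_2$, since the kernel is the Laplace-type transform $\int e^{st}\,d\mu(s)$. What would fail in that case is only the identification of the individual parameters. So I will state the conclusion in terms of $\widetilde G$, as the theorem does, and remark that the parameters $\alpha,\beta$ themselves are recovered, up to relabeling, under the nondegeneracy assumptions.
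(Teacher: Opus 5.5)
Your proof is correct, and it takes a genuinely different route from the paper. Your nodes $s^{(i)}_l$ are exactly the paper's $\alpha_j$, since Section~\ref{sec:emm} writes $\widetilde G=\sum_j e^{t\alpha_j}\beta_j$ with $\alpha_j=-\tau_j^{-1}$.

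\textbf{The paper's route.} It runs Prony's method in three steps:
\begin{enumerate}
\item Lemma~\ref{lemma1-EMM} shows that the moments $m_k=\sum_j\alpha_j^k\beta_j$ satisfy a linear recurrence whose coefficients are those of $p(y)=\prod_\nu(y-\alpha_\nu)$.
\item Lemma~\ref{lemma2-EMM} computes the Hankel determinant $\det M=\prod_l\beta_l\prod_{j<k}(\alpha_j-\alpha_k)^2$. Off the set $S$, the Hankel system therefore determines the coefficients of $p$. This gives $p_1=p_2$, and hence equal rates as its roots.
\item A final Vandermonde solve gives equal $\beta$'s.
\end{enumerate}

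\textbf{Your route.} You skip the recurrence and the Hankel determinant. You apply a single Vandermonde system to the signed difference measure $\mu_1-\mu_2$. This measure has at most $2N$ atoms, so the vanishing of its first $2N$ moments forces it to vanish.

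\textbf{What each approach buys.} Your argument is shorter. It also gives $\widetilde G_1(x,\cdot)=\widetilde G_2(x,\cdot)$ at every $x$ with no nondegeneracy hypothesis. The paper's Hankel step instead needs $x\notin S$ (nonzero weights and distinct rates), so as written it only yields the identity almost everywhere. The paper's route, on the other hand, is constructive. It reconstructs $\alpha_j,\beta_j$ explicitly from $\p_t^k\widetilde G(x,0)$, $k\le 2N-1$: solve the Hankel system, find the roots of $p$, then solve the Vandermonde system. Yours is a pure uniqueness argument. As you note, it still identifies the parameters up to relabeling when the weights are nonzero and the rates within each model are distinct.
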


The argument uses our main result of Theorem \ref{main}, as well as a particular algebraic manipulation via Vandermonde matrices. We expect that our methods would apply also to other parametric models.

\subsection{Motivation and connection to the literature} 

The viscoacoustic wave equation has been an object of study since at least the work of Dafermos \cite{Dafermos}, who has considered the direct problem and analyzed the behavior of its solutions for times $t\rightarrow\infty$ by means of the theory of abstract Volterra equations. Carcione, Kosloff and Kosloff \cite{CKK} have applied the viscoacoustic wave equation in order to model the anelastic properties of real materials in seismology, using the theory of viscoelasticity based on Boltzmann's superposition principle. A related inverse problem was studied in \cite{Leander} by Leander, who has analyzed the transient linear acoustic wave propagation in fluids with Maxwell relaxation processes, in particular in the case of a single process, and has obtained estimates of the governing parameters of the Maxwell fluid by means of numerical approximation methods. The fundamental work of Christensen \cite{Christensen} on the theory of viscoelasticity has put the bases for the results obtained in more recent years. The numerical methods of Bai, Yingst, Bloor and Leveille \cite{BYBL} have shown that viscoacoustic inversion is a more realistic approach than acoustic inversion in the study of elasticity of real earth materials. Hao and Alkhalifah \cite{HA} have developed a quite general
representation of the viscoacoustic wave
equation in the cases of (quasi-elastic) orthorhombic anisotropy and transverse isotropy, and have studied the related direct problem. In relatively recent work \cite{HG}, Hao and  Greenhalgh have succeeded in incorporating the quality factor $Q$, which is a common phenomenological description of seismic wave attenuation in the interior of the Earth, into a viscoacoustic wave equation. In seismic applications the relevant case is $n=3$, and caustic formation is essentially unavoidable when considering the case of \textit{P} waves.

The wave equation, which corresponds to the viscoacoustic wave equation in the case of vanishing memory kernel, plays a central role in seismic modeling, processing, imaging and inversion.
Inverse problems for the wave equation have been studied in several formulations and through a variety of methods. Rakesh and Symes \cite{RS88} have used geometrical optics solutions for recovering a potential in the wave equation with fixed sound speed equal to $1$. The method of geometrical optics has been used also in several works by Stefanov and Uhlmann, such as \cite{SU05} for the case of simple metrics, and in collaboration with Oksanen and Salo for the more general case of principal type operators \cite{OSSU}. The boundary control method has also been very successful in the study of inverse problems for the wave equation: for this we refer to the book by Katchalov, Kurylev and Lassas \cite{KKL01}, as well as to the references therein. Finally, we report the technique based on distorted plane waves that appeared in a recent work by Oksanen, Rakesh and Salo \cite{ORS24}.  

The memory term appearing in the viscoacoustic operator $P$ defined in \eqref{viscoacoustic-equation} has the same order as the wave operator. Related inverse problems with lower order memory terms have been investigated in works of Bukhgeim, Dyatlov, Isakov and Uhlmann, e.g. \cite{BDI97, BD00, BDU07} where the authors considered the recovery of the memory term from Dirichlet-to-Neumann data for the operator
$$ u_{tt} - \Delta u +\int_{0}^t k(x,t-s)u(x,s)ds $$
and for its version with first and zero-th order perturbations. For the actual viscoelastic case where the memory term involves second order derivatives in space, \cite{Rom-article, Romanov} and references therein state several results on inverse problems for integro-differential equations with convolutional memory terms for both electrodynamics and viscoelasticity. 

The viscoacoustic wave equation and related inverse problems constitute a very active field of study. To our knowledge, propagation of singularities in the case of $C^{1,1}$ coefficients is currently being researched by Mazzuccato and Qi, who are employing a Hart Smith type parametrix \cite{HartSmith} and microlocal factorization techniques. However, both the methods used and the results obtained, while related to our study, do not substantially overlap with it.

\subsection{Organization of the article} 
 The rest of the article is organized as follows. In Section \ref{sec_propagation} we prove Theorem \ref{GO-construction-general} and show how to construct approximate solutions, both in the form of geometrical optics in the case of simple metrics, and more generally as Gaussian beam quasimodes when the metric is not known to be simple a priori. Our propagation of singularities result of Theorem \ref{propagation} is also proved in the same section. In Section \ref{sec_wellposedness} we briefly study the well-posedness of the direct problem for the viscoacoustic wave equation, and are thus able to define the source-to-solution map, which encodes the data. The solution of the inverse problem itself is presented in Section \ref{sec_inverse}, which includes the proof of our main result, Theorem \ref{main}. Section \ref{sec:emm} is dedicated to the reconstruction of the viscosity and elasticity coefficients in the extended Maxwell model, using our Theorem \ref{main} as the main tool. Finally, we have added a short Appendix in which we present the derivation of the viscoacoustic operator from the classical theory of viscoelasticity discussed in \cite{Christensen}.

\subsection*{Acknowledgments} 
G. Covi and M. Salo are partially supported by the FAME flagship of the Research Council of Finland (grant 359186). M.V. de Hoop gratefully acknowledges the support of the Simons Foundation under the MATH + X program, the National Science Foundation under grant DMS-2407456, and the corporate members of the Geo-Mathematical Imaging Group at Rice University. 

\section{Existence of approximate solutions and propagation of singularities} \label{sec_propagation}

In this section we construct special approximate solutions to the viscoacoustic equation. We start from the case of simple sound speed $c$, for which we show how to construct approximate geometrical optics solutions.
\begin{Proposition}\label{geom-optics}
    Suppose that the sound speed $c$ is simple in $\ol{\Omega}$, and assume that $\varphi(x,t):= t-\psi(x)$, where $\psi \in C^{\infty}(\ol{\Omega})$ solves the eikonal equation 
\[
|\nabla \psi| = \frac{1}{\sqrt{c(x)}} \quad\text{ in $\Omega$}.
\]
Further, assume that $a'_0 \in C^{\infty}(\ol{\Omega} \times [0,T])$ solves the transport equation 
\[
2 (\p_t + c(x) \nabla \psi \cdot \nabla) a'_0 + b(x) a'_0 = 0 \quad\text{ in $\Omega\times [0,T]$} 
\]
where $b(x) := (c^{-1}G)(x,0) - Q\psi(x)$.

Then for any $N\in\mathbb N$ there exists a function $u\in C^\infty(\ol{\Omega} \times [0,T])$ of the form 
\[
u(x,t) =  e^{ik \varphi(x,t)} \left(a'_0(x,t) + \sum_{l=1}^{N-1}(-ik)^{-l}\bigg( a'_l(x,t)  + e^{-ikt}a''_{l-1}(x,t)\bigg)\right)
\]
such that 
\[
\norm{Pu}_{ L^2((0,T), L^2(\Omega))} = O(k^{-N}).
\]
The functions $a'_j, a''_j \in C^{\infty}(\ol\Omega\times [0,T])$ are independent of $k$, and respectively solve transport and Volterra equations.
\end{Proposition}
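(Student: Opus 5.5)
The plan is to substitute the ansatz into $P=Q+R$, expand in powers of $k$, and choose the amplitudes so that every coefficient of $k^{2},k^{1},\dots,k^{-N+1}$ vanishes. The one new feature compared with geometrical optics for the wave equation is the memory term. It has to be integrated by parts in $s$, and the boundary term at $s=0$ produces an oscillation $e^{ik\varphi(x,0)}=e^{-ik\psi(x)}=e^{ik\varphi}e^{-ikt}$. This boundary term is what forces the extra amplitudes $e^{-ikt}a''_{l-1}$.

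\emph{Step 1: the wave part.} For $v=e^{ik\varphi}a$ with $\varphi=t-\psi$ I would compute
\[
Qv = e^{ik\varphi}\Big(-\tfrac{k^2}{2}\big(c|\nabla\psi|^2-1\big)a - ik\big(\p_t a + c\nabla\psi\cdot\nabla a\big) + \tfrac{ik}{2}\big(\nabla\cdot(c\nabla\psi)\big)a + Qa\Big).
\]
The eikonal equation removes the $k^2$ term. The drift part of the $k$ term is the vector field $\p_t+c\nabla\psi\cdot\nabla$, whose integral curves are the lifted geodesics. Because $c$ is simple, $\psi$ can be taken as the distance function from a point outside $\ol\Omega$, and it is then smooth on $\ol\Omega$.

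\emph{Step 2: the memory part.} For $Rv=-\frac12\int_0^t\nabla\cdot(G(x,t-s)\nabla v(x,s))\,ds$ I would write $\nabla v = e^{ik\varphi}(-ik\nabla\psi\, a+\nabla a)$ and use $e^{ik\varphi(x,s)}=(ik)^{-1}\p_s e^{ik\varphi(x,s)}$. Integrating by parts repeatedly in $s$ then gives an expansion in powers of $k^{-1}$ of the form
\[
\int_0^t e^{iks}F(x,t,s)\,ds=\sum_{m\ge0}(-1)^m(ik)^{-m-1}\big[e^{iks}\p_s^mF\big]_{s=0}^{s=t}.
\]
The upper endpoint $s=t$ gives $e^{ik\varphi}$ times local terms. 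The leading one, coming from $\nabla\cdot(\cdot)$ applied to $G(x,0)|\nabla\psi|^2 a$, has order $k^{1}$ and equals $e^{ik\varphi}(ik)\,\tfrac12 G(x,0)|\nabla\psi|^2a$. Using $|\nabla\psi|^2=1/c$, this combines with Step 1 into the stated transport equation with $b=c^{-1}G(x,0)-Q\psi$ (the $Q\psi$ contribution comes from $\frac12\nabla\cdot(c\nabla\psi)$, modulo normalization). The lower endpoint $s=0$ gives $e^{ik\varphi}e^{-ikt}$ times terms involving $G(x,t)$, $\p_sG(x,t-s)|_{s=0}$ and $a(x,0)$. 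These are $O(k)$ at worst.

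\emph{Step 3: the recursion.} At each order $k^{1-l}$, $l\ge1$, I would collect the $e^{ik\varphi}$ terms, which give the inhomogeneous transport equation $2(\p_t+c\nabla\psi\cdot\nabla)a'_l+ba'_l=F_l(a'_0,\dots,a'_{l-1},a''_\cdot)$. This is solved by integrating along the flow from $t=0$; the data is arbitrary, say zero, and smoothness holds since the flow is transversal to $\{t=0\}$. Separately I would collect the $e^{ik\varphi-ikt}$ terms. Here $R$ acting on $e^{-ik\psi}a''$ involves no oscillation in $s$. Its principal part is $-\frac12 k^2|\nabla\psi|^2\int_0^tG(x,t-s)a''(x,s)\,ds$, while $Q(e^{-ik\psi}a'')$ produces $-\frac12 k^2 c|\nabla\psi|^2 a'' = -\frac12 k^2 a''$. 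So the leading equation for $a''_{l-1}$ at each order is the Volterra equation of the second kind
\[
a''(x,t)+c^{-1}\int_0^tG(x,t-s)a''(x,s)\,ds=H(x,t),
\]
where $H$ collects the boundary terms from Step 2 together with lower-order contributions of earlier $a''$. This has a unique smooth solution depending smoothly on $x$ (Neumann series, with $x$ as a parameter). The factor $(-ik)^{-l}$ in front of $a''_{l-1}$ matches the $k^2\cdot k^{-l}$ balance against the $O(k^{1-(l-1)})$ boundary source.

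Finally I would truncate at $N$. The residual consists of finitely many terms of order $k^{-N}$ or better, bounded uniformly in $(x,t)\in\ol\Omega\times[0,T]$, which gives the $L^2$ estimate. The main obstacle will be the bookkeeping in Step 3. The two families are coupled: $R$ acting on the $a''$ terms and the integrations by parts acting on the $a'$ terms feed into each other, so the order of solving has to be checked. I would first solve for $a'_0$, then $a''_0$, then $a'_1$, then $a''_1$, and so on, verifying that each right-hand side only involves previously determined amplitudes.
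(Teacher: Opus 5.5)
Your architecture is the same as the paper's. You split the ansatz into an $e^{ik\varphi}$ part and an $e^{-ik\psi}$ part, integrate by parts in $s$ only on the former, get transport equations from the $s=t$ endpoint and second-kind Volterra equations (Neumann series, $x$ as a parameter) from the $s=0$ endpoint, and solve in the order $a'_0,a''_0,a'_1,\dots$.

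The problem is the one quantitative claim, that the $k^1$ terms reproduce the stated $b$. Two of your signs are wrong. Since $Q\psi=\frac12\nabla\cdot(c\nabla\psi)$, one has
\[
Qv=e^{ik\varphi}\Big(-\tfrac{k^2}{2}\big(c|\nabla\psi|^2-1\big)a-ik\big(\p_t a+c\nabla\psi\cdot\nabla a\big)-ik\,(Q\psi)\,a+Qa\Big).
\]
The principal part of $Rv$ is $+\frac{k^2}{2}\int_0^t e^{ik\varphi(x,s)}G(x,t-s)|\nabla\psi|^2a(x,s)\,ds$, whose $s=t$ endpoint contributes $-\frac{ik}{2}e^{ik\varphi}c^{-1}G(x,0)\,a$. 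Hence the $k^1$ coefficient is
\[
-\tfrac{ik}{2}\,e^{ik\varphi}\Big(2(\p_t+c\nabla\psi\cdot\nabla)a'_0+\big(c^{-1}G(x,0)+2Q\psi\big)a'_0\Big).
\]
So the transport equation that cancels it has $b=c^{-1}G(x,0)+2Q\psi$. For $G=0$, $c=1$ this is the familiar $2(\p_t+\nabla\psi\cdot\nabla)a+\Delta\psi\,a=0$. It is also the operator the paper's proof actually uses in its Step~5 (namely $4T$ there). The $b=c^{-1}G(x,0)-Q\psi$ printed in the statement is a misprint, and ``modulo normalization'' cannot bridge it, because a zero-order coefficient is not a normalization. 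If $a'_0$ solved the printed equation, the $k^1$ term would be $-\frac{3ik}{2}e^{ik\varphi}(Q\psi)\,a'_0$. Nothing else in the ansatz can cancel it:
\begin{enumerate}[label=(\roman*)]
\item $a'_1$ first enters at order $k^0$, since the eikonal term vanishes identically;
\item the $a''$ terms carry the factor $e^{-ik\psi}$, which is asymptotically orthogonal in $L^2$ to $e^{ik\varphi}=e^{-ik\psi}e^{ikt}$.
\end{enumerate}
You should derive and state the corrected $b$ rather than assert agreement with the printed one. The same sign slip occurs in your Volterra step, where the leading equation is $a''-c^{-1}\int_0^tG(x,t-s)a''(x,s)\,ds=H$, but there it is harmless.

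There are two smaller points, both also glossed over in the paper. First, integrating the transport equations from $\{t=0\}$ fails for your choice of $\psi$. The backward characteristic from $(x,t)$ reaches the singular point $p$ at time $t-\psi(x)$, so for $t>\psi(x)$ it never reaches $t=0$. Instead, extend the already computed sources smoothly and prescribe zero data on a level set $\{\psi=\psi_0\}$ near $p$. This works because that level set is transversal to the flow ($c\nabla\psi\cdot\nabla\psi=1$) and the coefficients do not depend on $t$. Second, with exactly the terms displayed in the statement the last Volterra amplitude is $a''_{N-2}$, so the residual is only $O(k^{2-N})$. Take two more terms; this is harmless since $N$ is arbitrary.
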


\begin{proof}
\textbf{Step 1 (Set-up).}
Let $u(x,t) := e^{ik \varphi(x,t)} a(x,t)$, where $\varphi(x,t):=t-\psi(x)$ for some real valued function $\psi$ to be determined, and $$a(x,t):=\sum_{l=0}^{N-1}(-ik)^{-l}a_l(x,t), \qquad\mbox{with}\qquad a_l(x,t):= a'_l(x,t) + (-ik)^{-1}e^{-ikt}a''_l(x,t).$$
The wave operator part satisfies 
\[
Qu = e^{ik\varphi} (k^2 q(x,t,\nabla_{x,t} \varphi) a + ik La  + Qa),
\]
where $q(x,t,(\xi,\tau)) := \frac 12(\tau^2 - c(x) |\xi|^2)$ and $L := -(\p_t \varphi \p_t - c \nabla \varphi \cdot \nabla) + Q\varphi$. For the memory term we compute 

\begin{align*}
Ru &= -\frac 12\int_0^t  e^{ik\varphi(s)} (\nabla+ik\nabla \varphi(s)) \cdot [ G(t-s) (\nabla+ik\nabla \varphi(s)) a(s) ] \,ds \\
 &=  -\frac 12 \left(-k^2 \int_0^t e^{ik\varphi(s)} G(t-s) |\nabla \varphi(s)|^2 a(s) \,ds\right. \\
 & \qquad + ik \int_0^t e^{ik\varphi(s)} \left[ 2 \nabla \varphi(s) \cdot G(t-s) \nabla a(s) + \nabla \cdot (G(t-s) \nabla \varphi(s) a(s)) \right] \,ds \\
 & \qquad \left.+ \int_0^t e^{ik\varphi(s)}  \nabla \cdot (G(t-s) \nabla  a(s) ) \,ds\right)
 \\ & =:
 -k^2 \int_0^t e^{ik\varphi(s)} A(t,s) \,ds + ik \int_0^t e^{ik\varphi(s)} B(t,s) \,ds + \int_0^t e^{ik\varphi(s)} C(t,s) \,ds .
\end{align*}

In order to study the behaviour in $k$ of the memory term, we can integrate by parts using the formula
\begin{equation}\label{int-by-parts}\begin{split}
\int_0^t e^{ik\varphi(s)} h(s) \,ds &= \int_0^t \p_s(e^{ik\varphi(s)}) \frac{h(s)}{ik \p_t \varphi(s)} \,ds \\ & = \frac{1}{ik} \left[ \frac{e^{ik\varphi(s)} h(s)}{\p_t \varphi(s)} \right]_{s=0}^t -\frac 1{ik} \int_0^t e^{ik\varphi(s)} \p_s \left(\frac{h(s)}{\p_t \varphi(s)} \right) \,ds.
\end{split}\end{equation}
We define by recursion $h^{(j+1)} := \p_s\left(\frac{h^{(j)}(s)}{\p_t\varphi(s)}\right)$ for all non-negative integers $j$, and $h^{(0)}:= h$. Then we can iterate the previous formula to obtain, for all non-negative integers $N$,
\begin{equation}\label{int-by-parts-iterated}\begin{split}
\int_0^t e^{ik\varphi(s)} h^{(0)}(s) \,ds = -\sum_{j=1}^{N} (-ik)^{-j} \left[ \frac{e^{ik\varphi(s)} h^{(j-1)}(s)}{\p_t \varphi(s)} \right]_{s=0}^t + (-ik)^{-N}\int_0^t e^{ik\varphi(s)} h^{(N)}(s) \,ds.
\end{split}\end{equation}

\textbf{Step 2 (Eikonal equation).} Observe that we have 
$$\p_t \varphi \equiv 1, \qquad \nabla\varphi(x,t) = -\nabla\psi(x), \qquad \varphi(x,s)-\varphi(x,t) = s-t, \qquad L = -( \p_t + c \nabla \psi \cdot \nabla) -\frac 12 \nabla \cdot (c \nabla \psi). $$ Moreover, because $\psi$ is a real function we have $$|e^{ik\varphi(s)}| = |e^{iks}| = 1,$$ and so integrating by parts $N\in\mathbb N$ times we get
\begin{equation}\label{eq:int-by-parts-iterated-2}
\int_0^t e^{iks} h(t,s) \,ds = - \sum_{j=1}^{N-1} (-ik)^{-j}\left[ e^{iks}\p^{j-1}_s h(t,s) \right]_{s=0}^t + O(k^{-N}).
\end{equation}
Applying this formula to the computation of $Ru$, it becomes apparent that the memory term $e^{-ik\varphi(t)}Ru$ has order at most $O(k)$. Therefore, it holds that
$$ e^{-ik\varphi(t)}Pu = k^2 q(x,t,\nabla_{x,t} \varphi) a + O(k).$$

Since the sound speed is simple, we can solve the eikonal equation $(\p_t \varphi)^2 - c |\nabla \varphi|^2 = 0$ by letting $\varphi(x,t) = t - \psi(x)$ where $\psi$ was a solution of $|\nabla \psi(x)|^2 = \frac{1}{c(x)}$.

\textbf{Step 3 (Lower order terms).} Given that $A(x,t,s) = -\frac 12(c^{-1}G)(x,t-s) a(x,s)$, we will also write
{ 
\begin{align*}
    A_l(x,t,s) &:=-\frac 12 (c^{-1}G)(x,t-s) a_l(x,s) = -\frac 12(c^{-1}G)(x,t-s) \bigg(a'_l(x,s) + (-ik)^{-1}e^{-iks}a''_l(x,s)\bigg)
    \\ & =: A'_l(x,t,s) + (-ik)^{-1}e^{-iks}A''_l(x,t,s),
\end{align*} 
}
and similarly for $B(x,t,s)$ and $C(x,t,s)$, for all $l\in\{0,...,N-1\}$. Then we compute
\begin{align*}
    La & = \sum_{l=0}^{N-1}(-ik)^{-l}\bigg(La'_l + e^{-ikt}\bigg[  2  a''_l + (-ik)^{-1}La''_l \bigg]\bigg),
\end{align*}
\begin{align*}
    Qa & = \sum_{l=0}^{N-1}(-ik)^{-l}\bigg(Qa'_l + e^{-ikt}\bigg[(-ik)a''_l +2\p_ta''_l +(-ik)^{-1} Q a''_l\bigg]\bigg),
\end{align*}
and by the eikonal equation the operator satisfies
\begin{align*}
    e^{-ik\varphi(t)}Pu & = e^{-ik\varphi(t)}(Qu + Ru)
    \\ & =
    ik La  + Qa  + \int_0^t e^{ik(s-t)} \bigg(-k^2 A + ik B + C\bigg)(t,s)\,ds
    \\ & =
    ik La  + Qa  + \int_0^t e^{ik(s-t)}  \sum_{l=0}^{N-1}\bigg((-ik)^{2-l}A'_l+ (-ik)^{-l+1}e^{-iks}A''_l\bigg)(t,s)\,ds \\ & \qquad + 
    \int_0^t e^{ik(s-t)}\sum_{l=0}^{N-1}\bigg(-(-ik)^{1-l}B'_l - (-ik)^{-l}e^{-iks}B''_l\bigg)(t,s) \,ds 
    \\ & \qquad + \int_0^t e^{ik(s-t)}\sum_{l=0}^{N-1}\bigg((-ik)^{-l}C'_l + (-ik)^{-l-1}e^{-iks}C''_l\bigg)(t,s) \,ds
    \\ & =
    ik La  + Qa  + \sum_{l=0}^{N-1}(-ik)^{-l} \int_0^t e^{ik(s-t)}\bigg(-k^2A'_l + ikB'_l+ C'_l\bigg)(t,s) \,ds 
    \\ & \qquad + e^{-ikt}\sum_{l=0}^{N-1}(-ik)^{-l-1}
    \int_0^t  \bigg( -k^{2}A''_l+ikB''_l + C''_l\bigg)(t,s) \,ds
\end{align*}
For the first integral we use the integration by parts formula \eqref{eq:int-by-parts-iterated-2}, which gives
\begin{align*}
    &  \sum_{l=0}^{N-1}(-ik)^{-l} \int_0^t e^{ik(s-t)}\bigg(-k^2A'_l(t,s) + ikB'_l(t,s) + C'_l(t,s)\bigg) \,ds =
    \\ & = - \sum_{l=0}^{N-1}\sum_{j=1}^{N-1}  (-ik)^{-(l+j)}\left[ e^{ik(s-t)} \p^{j-1}_s\bigg(-k^2A'_l + ikB'_l + C'_l\bigg)(t,s) \right]_{s=0}^t + O(k^{-N})
    \\ & = - \sum_{\sigma=1}^{N-1}(-ik)^{-\sigma}\sum_{l=0}^{\sigma-1}  \left[ e^{ik(s-t)} \p^{\sigma-l-1}_s\bigg(-k^2A'_{l}+ ikB'_{l} + C'_{l}\bigg)(t,s) \right]_{s=0}^t + O(k^{-N})
\end{align*}
Then we can write
\begin{align*}
    & e^{-ik\varphi(t)}Pu  = 
    - \sum_{l=0}^{N-1}(-ik)^{1-l}La'_l  - e^{-ikt}\sum_{l=0}^{N-1}(-ik)^{-l} \bigg(  2(-ik)  a''_l + La''_l \bigg)  
    \\ & \quad + \sum_{l=0}^{N-1}(-ik)^{-l}Qa'_l  + e^{-ikt}\sum_{l=0}^{N-1}(-ik)^{-l-1}\bigg((-ik)^{2}a''_l +2(-ik)\p_ta''_l + Q a''_l\bigg)
    \\ & \quad - \sum_{l=1}^{{ N}-1}(-ik)^{-l}\sum_{j=0}^{l-1} \p^{l-j-1}_s\bigg(     (-ik)^2A'_{j}- (-ik)B'_{j} + C'_{j}\bigg)(t,t)
    \\ & \quad + e^{-ikt}\sum_{l=1}^{{ N}-1}(-ik)^{-l}\sum_{j=0}^{l-1}  \p^{l-j-1}_s\bigg((-ik)^2A'_{j}- (-ik)B'_{j} + C'_{j}\bigg)(t,0) 
    \\ & \quad + e^{-ikt}\sum_{l=0}^{N-1}(-ik)^{-l-1}
    \int_0^t  \bigg( (-ik)^2A''_l - (-ik)B''_l + C''_l\bigg)(t,s) \,ds + O(k^{-N})
    \\ & = 
    - (-ik)\big\{La'_0\big\} - \big\{La'_1 - Qa'_0\big\}
    \\ & \quad - \sum_{l=1}^{N-1}(-ik)^{-l}\bigg\{ La'_{l+1} -Qa'_l +\sum_{j=0}^{l-1} \p^{l-j-1}_s\bigg(     (-ik)^2A'_{j}- (-ik)B'_{j} + C'_{j}\bigg)(t,t)\bigg\}
    \\ & \quad - e^{-ikt}\sum_{l=0}^{N-1}(-ik)^{-l+1} \bigg( a''_l - \int_0^t   A''_l (t,s) \,ds\bigg) 
    \\ & \quad - e^{-ikt}\sum_{l=0}^{N-1}(-ik)^{-l}\bigg( La''_l -2\p_ta''_l + \int_0^t   B''_l (t,s) \,ds\bigg) 
    \\ & \quad + e^{-ikt}\sum_{l=0}^{N-1}(-ik)^{-l-1} \bigg( Q a''_l + \int_0^t   C''_l(t,s) \,ds\bigg)
    \\ & \quad + e^{-ikt}\sum_{l=1}^{N-1}(-ik)^{2-l}\sum_{j=0}^{l-1}  \big(\p^{l-j-1}_sA'_{j}\big)(t,0)   - e^{-ikt}\sum_{l=1}^{N-1}(-ik)^{1-l}\sum_{j=0}^{l-1}  \big(\p^{l-j-1}_sB'_{j} \big)(t,0)    
    \\ & \quad + e^{-ikt}\sum_{l=1}^{N-1}(-ik)^{-l}\sum_{j=0}^{l-1}  \big(\p^{l-j-1}_sC'_{j}\big)(t,0) + O(k^{-N}),
\end{align*}
and after some reordering of the terms 
\begin{align*}
    & e^{-ik\varphi(t)}Pu  = 
    - (-ik)\big\{La'_0\big\} - \big\{La'_1 - Qa'_0\big\}
    \\ & \quad - \sum_{l=1}^{N-1}(-ik)^{-l}\bigg\{ La'_{l+1} -Qa'_l +\sum_{j=0}^{l-1} \p^{l-j-1}_s\bigg(     (-ik)^2A'_{j}- (-ik)B'_{j} + C'_{j}\bigg)(t,t)\bigg\}
    \\ & \quad - e^{-ikt}(-ik) \bigg\{ a''_0 - \int_0^t   A''_0 (t,s) \,ds\bigg\}
    \\ & \quad - e^{-ikt} \bigg\{ a''_1 - \int_0^t   A''_1 (t,s) \,ds +  La''_0 -2\p_ta''_0 + \int_0^t   B''_0 (t,s) \,ds\bigg\}
    \\ & \quad - e^{-ikt}\sum_{l=0}^{N-1}(-ik)^{-l-1} \bigg\{ a''_{l+2} - \int_0^t   A''_{l+2} (t,s) \,ds+ La''_{l+1} -2\p_ta''_{l+1}
    \\ & \qquad\qquad\qquad\qquad\qquad\quad + \int_0^t   B''_{l+1} (t,s) \,ds -Q a''_l - \int_0^t   C''_l(t,s) \,ds\bigg\}
    \\ & \quad + e^{-ikt}(-ik)A'_{0}(t,0)  + e^{-ikt}\bigg(\sum_{j=0}^{1}\p^{1-j}_sA'_{j} - B'_{j}\bigg)(t,0)    
    \\ & \quad + e^{-ikt}\sum_{l=1}^{N-1}(-ik)^{-l}\bigg(\sum_{j=0}^{l+1}  \p^{l+1-j}_sA'_{j}  -\sum_{j=0}^{l}  \p^{l-j}_sB'_{j} + \sum_{j=0}^{l-1}  \p^{l-j-1}_sC'_{j}\bigg)(t,0)
    \\ & \quad + O(k^{-N}).
\end{align*}
If we define the operator $V$ such that 
{ 
$$ Vu(x,t) := u(x,t) +\frac 12 \int_0^t (c^{-1}G)(x,t-s)u(x,s) ds, $$}
then we obtain
\begin{align*}
    & e^{-ik\varphi(t)}Pu  = 
    - (-ik)\big\{La'_0+  A'_{0}(t,t)\big\} - \bigg\{La'_1 + A'_{1}(t,t) - Qa'_0 + \bigg(\p_s  A'_{0}-  B'_{0}\bigg)(t,t) \bigg\}
    \\ & \quad - \sum_{l=1}^{N-1}(-ik)^{-l}\bigg\{ La'_{l+1} + A'_{l+1}(t,t) -Qa'_l + \bigg(\sum_{j=0}^{l} \p^{l+1-j}_s  A'_{j}  -\sum_{j=0}^{l} \p^{l-j}_s B'_{j} +  \sum_{j=0}^{l-1} \p^{l-j-1}_s C'_{j}\bigg)(t,t)\bigg\}
    \\ & \quad - e^{-ikt}(-ik) \big\{V a''_0 - A'_{0}(t,0)\big\} 
    \\ & \quad - e^{-ikt} \bigg\{ V a''_1 +  La''_0 -2\p_ta''_0 + \int_0^t   B''_0 (t,s) \,ds - \bigg(\sum_{j=0}^{1}\p^{1-j}_sA'_{j} - B'_{j}\bigg)(t,0)\bigg\}
    \\ & \quad - e^{-ikt}\sum_{l=1}^{N-1}(-ik)^{-l} \bigg\{ Va''_{l+1} + La''_{l} -2\p_ta''_{l}+ \int_0^t   B''_{l} (t,s) \,ds -Q a''_{l-1} - \int_0^t   C''_{l-1}(t,s) \,ds    
    \\ & \qquad\qquad\qquad\qquad\qquad -\bigg(\sum_{j=0}^{l+1}  \p^{l+1-j}_sA'_{j}  -\sum_{j=0}^{l}  \p^{l-j}_sB'_{j} + \sum_{j=0}^{l-1}  \p^{l-j-1}_sC'_{j}\bigg)(t,0) \bigg\}
    \\ & \quad + O(k^{-N}).
\end{align*}

This implies that the equations solved by the functions $a'_l, a''_l$ are of the form
\begin{equation}\label{eq:transport-volterra}
    La'_l + A'_{l}(t,t) = \mathcal R'_l(x,t), \qquad Va''_l = \mathcal R''_l(x,t),
\end{equation} 
for all $l\in\{0,...,N-1\}$, where the right-hand side terms $\mathcal R'_l$ depend only on $\{a'_j\}_{j<l}$, and the right-hand side terms $\mathcal R''_l$ depend only on $\{a''_l\}_{j<l}$ and $\{a'_j\}_{j\leq l}$. This means that we can solve the equations recursively, starting from $a'_0$ and then alternating $a''_0, a'_1, a''_1, a'_2, ...$ . 

\textbf{Step 4 (Volterra equations).} For the second equation in \eqref{eq:transport-volterra}, which is a \emph{Volterra integral equation of the second kind}, we refer to \cite[Theorems 1.2.3 and 2.1.1]{Brunner}. The equation takes the form
$$ u(t) = f(t) + \int_0^t K(t-s)u(s)ds, $$
with $K:=c^{-1}G$. Define $K_1 := K$,
$$ K_n(t) := \int_0^t K_1(t-s)K_{n-1}(s)ds, \qquad n\geq 2. $$
and finally the resolvent kernel $R$ as the (absolutely and uniformly) convergent Neumann series
$$  R(t) := \sum_{n=1}^\infty K_n(t). $$
The unique solution $u(t)$ of the Volterra integral equation then takes the form
$$ u(t) = f(t) + \int_0^t R(t-s)f(s)ds, $$
which in particular implies that for all $N\in\mathbb N$ we have the estimate
$ \|u\|_{C^N} \lesssim \|f\|_{C^N}(1+\|R\|_{C^N})$.

\textbf{Step 5 (Transport equations).} We now solve the first equation in \eqref{eq:transport-volterra}. Define the operators $T,Z$ and the function $g$ as
{ $$ g(x):= -\frac 12(c^{-1}G)(x,0), \qquad Z:= -\frac 12 (\p_t  + c(x) \nabla \psi(x) \cdot \nabla),\qquad T:= -Z+\frac12 (Q\psi(x)-g(x)).$$}
Then for every $l \in\{0,...,N-1\}$ we have
\begin{align*}
    Ta'_l(x,t) = La'_l(x,t) +  A'_l(x,t,t) = \mathcal R'_l(x,t). 
\end{align*} 
To solve the transport equation, assume that $\mu(x,t) = \mu_G(x,t)$ is an integrating factor solving
\[
\mu^{-1} Z\mu = \frac{g-Q\psi}{2},
\]

\noindent so that
\begin{equation*}
    \begin{split}
        Z(\mu a'_l) =  \mu \left( Z + (\mu^{-1}Z \mu)  \right)a'_l = \frac{\mu\, Ta'_l}{2} = \frac{\mu\, \mathcal R'_l}{2}.
    \end{split}
\end{equation*}
The integral curve $\eta_x(t)$ of $Z$ with $\eta_x(0) = (x,0)$ is given by $\eta_x(t) = (\alpha_x(t),t)$, where $\alpha_x(t)$ satisfies $\dot{\alpha}_x(t) = (c \nabla \psi)(\alpha_x(t))$ and $\alpha_x(0) = x$. One can check that $\alpha_x(t) = \gamma_{x,(c\nabla \psi)(x)}(t)$ where $\gamma_{x,v}$ is the geodesic of the metric $\frac{1}{c(x)} \delta_{jk}$ with $\gamma_{x,v}(0) = x$ and $\dot{\gamma}_{x,v}(0) = v$. We can choose $\mu$ as 
\[
\mu(\alpha_x(t), t) = \exp \left( \frac{1}{2}\int_0^t \big( g-Q\psi\big)(\alpha_x(s)) \,ds \right),
\]
which can be rewritten as 
\[
\mu(x, t) =  \exp \left( \frac{1}{2}\int_0^t \big( g-Q\psi\big)(\alpha_x(-s)) \,ds \right).
\]
 We now fix an initial value $a'_l(x,0) = f'_l(x)$, and want to find $a'_l(x,t)$ solving 
\[
Z(\mu a'_l)  = \frac{\mu(x,t)\mathcal R'_l(x,t)}{2}, \qquad a'_l(x,0) = f'_l(x).
\]
We need that 
\[
(\mu a'_l)(\alpha_x(t),t) = (\mu a'_l)(x,0) + \frac 12\int_0^t \mu(\alpha_x(s),s)\mathcal R'_l(\alpha_x(s),s) \,ds.
\]
Since $\mu(x,0) = 1$, the solution of the transport equation is 
\begin{align*}
    a'_l(\alpha_x(t),t) & = \frac{1}{\mu(\alpha_x(t),t)} \left(f'_l(x) + \frac 12\int_0^t \mu(\alpha_x(s),s)\mathcal R'_l(\alpha_x(s),s) \,ds\right),
\end{align*}
which can be rewritten as
\begin{align*}
    a'_l(x,t) & =  \frac{f'_l(\alpha_x(-t))}{\mu(x,t)}  + \frac 1{2\mu(x,t)}\int_0^t \big(\mu \mathcal R'_l\big)(\alpha_x(-s),t-s) \,ds .
\end{align*}
Assuming that $f_{l+1}\equiv 0$ for all $l\geq 0$, we get
\begin{align}\label{eq:face-of-the-amplitudes}
    a'_0(x,t) & =  \frac{f'_0(\alpha_x(-t))}{\mu(x,t)}, \qquad\mbox{and}\qquad
    a'_{l+1}(x,t) & =  \frac 1{2\mu(x,t)}\int_0^t \big(\mu \mathcal R'_{l+1}\big)(\alpha_x(-s),t-s) \,ds, \quad l\geq 0,
\end{align}
where for all $l\geq 0$ it holds
\begin{equation}\label{eq:remainders-from-transport} \mathcal R'_{l+1}(x,t) = -Qa'_l + \p_s  A'_{l}(t,t) - B'_{l}(t,t) + \sum_{j=0}^{l-1}  \p^{l-j-1}_s\bigg(\p^{2}_s  A'_{j}  -\p_s B'_{j} + C'_{j}\bigg)(t,t). \end{equation}

Observe that by the definition of the amplitude $a(x,t)$ it holds
\begin{equation}\label{eq:amplitude-powers-k}a(x,t)= a'_0(x,t) + \sum_{l=0}^{N-1}(-ik)^{-l-1}\bigg( a'_{l+1}(x,t) + e^{-ikt}a''_{l}(x,t)\bigg) = \frac{f'_0(\alpha_x(-t))}{\mu(x,t)} + O(k^{-1}).\end{equation}

\noindent If the initial value $f'_0$ is supported near $x_0$, then by the above formula we see that $a$ is itself supported near the curve $(\alpha_{x_0}(t),t)$ modulo an $O(k^{-1})$ term. This proves that there exist approximate geometrical optics solutions concentrating near geodesics of the metric $\frac{1}{c(x)} \delta_{jk}$, just like in the case of the standard wave equation. 

{  \textbf{Step 6 (Final estimate).} We have proved that $|Pu| = |e^{-ik\varphi(t)}Pu| = O(k^{-N})$. Therefore, the $L^2$ estimate $\norm{Pu}_{ L^2((0,T), L^2(\Omega))} = O(k^{-N})$
follows immediately by the boundedness of $\Omega\times(0,T)$.
}\end{proof}

Next, we turn to the case of general (thus, not necessarily simple) sound speed, for which we construct approximate solutions in the form of quasimodes. 

\begin{proof}[Proof of Theorem \ref{GO-construction-general}] \textbf{Step 1 (Set-up).} In this case we look for a Gaussian beam approximate solution of the form
$$u(x,t)=\int_0^T e^{ik\varphi(x,t,\sigma)}a'(x,t,\sigma)d\sigma + \int_0^T e^{ik\varphi(x,0,\sigma)}a''(x,t,\sigma)d\sigma,$$ where $a', a'',\varphi$ are allowed to be complex valued. Here we use the ansatz in \cite{OSSU} written in terms of superpositions of wave packets, which will be useful later when we prove propagation of singularities results. We also assume that 
$$a'(z,\sigma) = \sum_{l=0}^{N-1} (-ik)^{-l}a'_l(z,\sigma), \qquad a''(z,\sigma) = (-ik)^{-1}\sum_{l=0}^{N-1} (-ik)^{-l}a''_l(z,\sigma),$$
which gives
$$u(x,t)=\sum_{l=0}^{N-1} (-ik)^{-l}\int_0^T\bigg( e^{ik\varphi(x,t,\sigma)}a'_l(x,t,\sigma) + (-ik)^{-1} e^{ik\varphi(x,0,\sigma)}a''_l(x,t,\sigma)\bigg)d\sigma.$$

In what follows we allow $z=(x,t)$ and $\zeta=(\xi,\tau)$, while $\sigma\in [0,T]$ will be the parameter of the bicharacteristic $\gamma(\sigma) = (z(\sigma),\zeta(\sigma))$. We make this choice of symbols in order to avoid the confusion arising from the fact that the coordinates defining $P$ already include time $t$, and thus the space projection $z$ of $\gamma$ is actually in spacetime.

\textbf{Step 2 (Matrix Riccati equation).} We shall look for a phase function $\varphi(z,\sigma)$ satisfying 
\begin{equation}\label{infinite-order}
    q(\cdot, \nabla_z\varphi(\cdot,\sigma)) + \p_\sigma\varphi(\cdot,\sigma)=0 \quad \mbox{ to infinite order at } z(\sigma) \mbox{ for } \sigma\in[0,T], 
\end{equation}
\begin{equation}\label{other-conditions}
    \varphi(z(\sigma),\sigma)=0, \qquad \nabla_z\varphi(z(\sigma),\sigma) =\zeta(\sigma),
\end{equation} 
where we recall that 
{ 
$$q(z,\zeta) := \frac 12(\tau^2 - c(x)|\xi|^2).$$}
For the construction of $\varphi$ we propose
\begin{equation}\label{eq:define-varphi} \varphi(z,\sigma) = \zeta(\sigma)\cdot(z-z(\sigma)) +\frac 12 H(\sigma)(z-z(\sigma))\cdot(z-z(\sigma)) + \varphi_3(z,\sigma), \end{equation}
with $H$ complex and symmetric, and $\varphi_3$ vanishing of order $3$ at $z(\sigma)$. It is immediately seen that the above $\varphi$ satisfies \eqref{other-conditions}. As in \cite{OSSU}, using these and the Hamilton equations, one proves that \eqref{infinite-order} holds to first order. In order to show that \eqref{infinite-order} holds to second order, one has to solve the matrix Riccati equation \begin{equation}\label{matrix-riccati} \p_t H + D + (BH+HB^T) + HCH =0 ,\end{equation}
where $B,C$ and $D$ are matrices with components given by the second derivatives of the Hamiltonian $q(z,\zeta)$:
$$ B_{jl} := \frac{\p^2q}{\p z_l\p \zeta_j}, \qquad C_{jl} := \frac{\p^2q}{\p \zeta_j\p \zeta_l}, \qquad D_{jl} := \frac{\p^2q}{\p z_j\p z_l}. $$
By \cite[Lemma 2.56]{KKL01} we can find a solution with the additional property that $\Im H(\sigma)$ is positive definite for all $\sigma\in[0,T]$. The construction of $\varphi_3(z, \sigma)$ is completed by successively solving linear systems of ODEs, see \cite[Lemma 6.3]{MSS}.

We observe that we can ensure that $\Im\varphi(z,\sigma)>0$ when $a'$ and $a''$ are chosen to be supported sufficiently close to $\gamma$. In fact, because the first order term of $\varphi$ is real valued, we have
$$ \Im\varphi(z,\sigma) = \frac 12 \Im H(\sigma)(z-z(\sigma))\cdot(z-z(\sigma)) + O(|z-z(\sigma)|^3)>0 $$
close to $\gamma$ by the positive definitedness of $\Im H(\sigma)$. In particular, this implies that for all $s\in [0,t]$ and all $k\in\mathbb N$ it holds
\begin{align*}
    |e^{ik\varphi(x,s,\sigma)}| = |e^{ik\Re\varphi(x,s,\sigma)}e^{-k\Im\varphi(x,s,\sigma)}| = e^{-k\Im\varphi(x,s,\sigma)}\leq 1,
\end{align*}
which by the integration by parts formula \eqref{int-by-parts-iterated} ensures that the memory terms of $P$ contribute at most to order $O(k)$.

\textbf{Step 3 (Lower order terms).} By linearity, in order to compute $Pu$ it suffices to find
$$ P(e^{ik\varphi(x,t)}a'_l(x,t)) \qquad \mbox{and} \qquad P(e^{ik\varphi(x,0)}a''_l(x,t)), $$
where for the sake of simplicity we have suppressed the dependence on $\sigma$. By the computations in the proof of Proposition \ref{geom-optics}, we find
\begin{align*}
    P(e^{ik\varphi(x,t)}a'_l(x,t)) & =
    -e^{ik\varphi}( (-ik)^2 q(z,\nabla_z\varphi) + (-ik) L - Q)a'_l 
    \\& \quad 
    + (-ik)^2 \int_0^t e^{ik\varphi(s)} A'_l(t,s) \,ds - (-ik) \int_0^t e^{ik\varphi(s)} B'_l(t,s) \,ds + \int_0^t e^{ik\varphi(s)} C'_l(t,s) \,ds
\end{align*}
with clear meaning of the symbols $A'_l, B'_l, C'_l$. The integration by parts formula \eqref{int-by-parts-iterated} then gives 
\begin{align*}
    & P(e^{ik\varphi(x,t)}a'_l(x,t))  =
                           \\ & =
     -\frac{e^{ik\varphi}}{\p_t\varphi}\bigg( (-ik)^2 \bigg\{\p_t\varphi q(z,\nabla_z\varphi)a'_l \bigg\} + (-ik) \bigg\{ \p_t\varphi La'_l + A_l^{'(0)} \bigg\}  + \bigg\{A_l^{'(1)} - B_l^{'(0)} - \p_t\varphi Qa'_l\bigg\} 
     \\ & \qquad\qquad
     + \sum_{j=1}^{N-1} (-ik)^{-j} \bigg\{ A_l^{'(j+1)} - B_l^{'(j)} + C_l^{'(j-1)} \bigg\} \bigg) 
    \\ & \quad
    +\frac{e^{ik\varphi(0)}}{\p_t\varphi(0)}\bigg( (-ik) \bigg\{ A_l^{'(0)}(0)   \bigg\} + \bigg\{  \big(A_l^{'(1)} - B_l^{'(0)}\big)(0)   \bigg\} + \sum_{j=1}^{N-1} (-ik)^{-j} \bigg\{ \big(A_l^{'(j+1)} - B_l^{'(j)} + C_l^{'(j-1)}\big)(0) \bigg\} \bigg) 
    \\ & \quad + O(k^{-N}).
\end{align*}
Summing over the index $l$, one obtains
\begin{align*}
    & \sum_{l=0}^{N-1} (-ik)^{-l} P(e^{ik\varphi(x,t)}a'_l(x,t)) =
                 \\ & =
      -\frac{e^{ik\varphi}}{\p_t\varphi}\bigg((-ik)^{2} \bigg\{\p_t\varphi q(z,\nabla_z\varphi)a'_0 \bigg\} +  (-ik) \bigg\{\p_t\varphi (q(z,\nabla_z\varphi)a'_1 + La'_0) + A_0^{'(0)}\bigg\} 
      \\ & \qquad
      +   \bigg\{\p_t\varphi (q(z,\nabla_z\varphi)a'_2 +  La'_1 - Qa'_0) + \sum_{j=0}^{1} A_{j}^{'(1-j)} - B_0^{'(0)} \bigg\} 
      \\ & \qquad
      + \sum_{l=1}^{N-1} (-ik)^{-l} \bigg\{\p_t\varphi (q(z,\nabla_z\varphi)a'_{l+2} + La'_{l+1}- Qa'_l) + \sum_{j=0}^{l+1} A_{j}^{'(l+1-j)} - \sum_{j=0}^{l} B_{j}^{'(l-j)} +\sum_{j=0}^{l-1} C_j^{'(l-1-j)} \bigg\}  \bigg) 
    \\ & \quad
    +\frac{e^{ik\varphi(0)}}{\p_t\varphi(0)}\bigg((-ik) \bigg\{ A_0^{'(0)}(0)\bigg\}+   \bigg\{ \bigg(\sum_{j=0}^{1} A_{j}^{'(1-j)} - B_0^{'(0)}\bigg)(0) \bigg\} 
      \\ & \qquad
      + \sum_{l=1}^{N-1} (-ik)^{-l} \bigg\{ \bigg(\sum_{j=0}^{l+1} A_{j}^{'(l+1-j)} - \sum_{j=0}^{l} B_{j}^{'(l-j)} +\sum_{j=0}^{l-1} C_j^{'(l-1-j)} \bigg)(0)\bigg\} \bigg) 
    \\ & \quad + O(k^{-N}).
\end{align*}
Similarly, one gets
\begin{align*}
    & P(e^{ik\varphi(x,0)}a''_l(x,t))= 
    \\& =
     e^{ik\varphi(0)} \bigg( (-ik)^2 \bigg\{\int_0^t  A''_l(t,s) \,ds - c|\nabla\varphi(0)|^2 a''_l \bigg\} 
     \\ & \qquad \qquad
     - (-ik) \bigg\{\int_0^t  B''_l(t,s) \,ds - 2c \nabla \varphi(0) \cdot \nabla a''_l - \nabla \cdot c(x) \nabla \varphi(0)a''_l \bigg\} + \bigg\{\int_0^t  C''_l(t,s) \,ds+ Qa''_l\bigg\} \bigg),
\end{align*}
and summing over the index $l$ eventually gives
\begin{align*}
    &\sum_{l=0}^{N-1} (-ik)^{-l-1} P\big(e^{ik\varphi(x,0,\sigma)}a''_l(x,t,\sigma)\big) =
    \\ & =
    e^{ik\varphi(0)}\bigg(  (-ik) \bigg\{\int_0^t  A''_0(t,s) \,ds - c|\nabla\varphi(0)|^2 a''_0 \bigg\} 
     \\ & \qquad\quad
     + \bigg\{\int_0^t  \big( A''_1 - B''_0\big)(t,s) \,ds - c|\nabla\varphi(0)|^2 a''_1 + 2c \nabla \varphi(0) \cdot \nabla a''_0 + \nabla \cdot c(x) \nabla \varphi(0)a''_0 \bigg\} 
     \\ & \qquad \quad
     +\sum_{l=1}^{N-1} (-ik)^{-l} \bigg\{\int_0^t  \big(A''_{l+1}- B''_l + C''_{l-1}\big)(t,s) \,ds 
     \\ & \qquad\qquad\qquad\qquad\quad - c|\nabla\varphi(0)|^2 a''_{l+1}  + 2c \nabla \varphi(0) \cdot \nabla a''_l + \nabla \cdot c(x) \nabla \varphi(0)a''_l + Qa''_{l-1}\bigg\} .
\end{align*}

Therefore, coming back to $Pu$,
\begin{align*}
    & Pu(x,t) = 
                                   \\ & =
    \int_0^T
    -e^{ik\varphi}\bigg((-ik)^{2} \bigg\{ q(z,\nabla_z\varphi)a'_0 \bigg\} +  (-ik) \bigg\{ q(z,\nabla_z\varphi)a'_1 + La'_0 + \frac{A_0^{'(0)}}{\p_t\varphi}\bigg\} 
      \\ & \qquad\qquad\qquad
      +   \bigg\{q(z,\nabla_z\varphi)a'_2 +  La'_1 - Qa'_0 + \frac{\sum_{j=0}^{1} A_{j}^{'(1-j)} - B_0^{'(0)}}{\p_t\varphi} \bigg\} 
      \\ & \qquad\qquad\qquad
      + \sum_{l=1}^{N-1} (-ik)^{-l} \bigg\{q(z,\nabla_z\varphi)a'_{l+2} + La'_{l+1}- Qa'_l 
      \\ & \qquad\qquad\qquad\qquad\qquad\qquad\;
      + \frac{\sum_{j=0}^{l+1} A_{j}^{'(l+1-j)} - \sum_{j=0}^{l} B_{j}^{'(l-j)} +\sum_{j=0}^{l-1} C_j^{'(l-1-j)}}{\p_t\varphi} \bigg\}  \bigg) d\sigma
    \\ & \quad
    + \int_0^Te^{ik\varphi(0)}\bigg(  (-ik) \bigg\{V a''_0 + \frac{A_0^{'(0)}}{\p_t\varphi}(0)\bigg\} 
     +\bigg\{Va_1'' - \int_0^t  B''_0(t,s) \,ds + \tilde L a''_0 + \bigg(\frac{\sum_{j=0}^{1} A_{j}^{'(1-j)} - B_0^{'(0)}}{\p_t\varphi}\bigg)(0)\bigg\} 
     \\ & \qquad\qquad\qquad \quad
     +\sum_{l=1}^{N-1} (-ik)^{-l} \bigg\{V a''_{l+1} + \int_0^t  \big(- B''_l + C''_{l-1}\big)(t,s) \,ds   + \tilde L a''_l + Qa''_{l-1}
     \\ & \qquad\qquad\qquad\qquad\qquad\qquad \quad
     +\bigg(\frac{\sum_{j=0}^{l+1} A_{j}^{'(l+1-j)} - \sum_{j=0}^{l} B_{j}^{'(l-j)} +\sum_{j=0}^{l-1} C_j^{'(l-1-j)}}{\p_t\varphi} \bigg)(0)\bigg\} \bigg) d\sigma
    \\ & \quad + O(k^{-N})
    \\ & =: I_1+I_2+O(k^{-N}),
\end{align*}
where we set $\tilde L:= -c \nabla \varphi(0) \cdot \nabla -\frac 12 \nabla \cdot c(x) \nabla \varphi(0)$ and
$$V w(x,t) :=  \frac 12 c|\nabla\varphi(0)|^2 w(x,t) -\frac 12 \int_0^t  G(x,t-s) |\nabla \varphi(x,s)|^2 w(x,s) \,ds.$$

\textbf{Step 4 (Transport and Volterra equations).} By formula \eqref{infinite-order}, we know that the remainder $r(z,\sigma):= q(z,\nabla_z\varphi) +\p_\sigma\varphi$ vanishes to infinite order at $z(\sigma)$. Integrating by parts in $\sigma$ we get
\begin{align*}
    -\int_0^T e^{ik\varphi}q(z,\nabla_z\varphi)a'_{l} d\sigma =
    (-ik)^{-1}\int_0^T e^{ik\varphi} \p_\sigma a'_{l} d\sigma -\int_0^T e^{ik\varphi}ra'_{l} d\sigma -(-ik)^{-1} [e^{ik\varphi} a'_{l}]_0^T,
\end{align*}
so that coming back to the first integral $I_1$ we have
\begin{align*}
    I_1 & = -
       (-ik)^{2} \int_0^T e^{ik\varphi}ra' d\sigma -
       (-ik) [e^{ik\varphi} a']_0^T 
      \\ & \quad -
      \int_0^T
    e^{ik\varphi}\bigg((-ik) \bigg\{ Ta'_0\bigg\}+   \bigg\{Ta'_1 - Qa'_0 + \frac{ A_{0}^{'(1)} - B_0^{'(0)}}{\p_t\varphi} \bigg\} 
      \\ & \qquad\qquad\qquad
      + \sum_{l=1}^{N-1} (-ik)^{-l} \bigg\{ Ta'_{l+1}- Qa'_l + \frac{\sum_{j=0}^{l} (A_{j}^{'(1)} -  B_{j}^{'(0)})^{(l-j)} +\sum_{j=0}^{l-1} C_j^{'(l-1-j)}}{\p_t\varphi} \bigg\}  \bigg) d\sigma,
\end{align*}
where by definition $A^{'(0)}_l(t)=-\frac 12G(0)|\nabla\varphi|^2a'_l$ and $T:=L -\frac 12\left( \frac{G(0)|\nabla\varphi|^2}{\p_t\varphi}-\p_\sigma\right)$. The third term on the right-hand side of $I_1$ gives us a sequence of transport equations in the unknowns $a'_l$, each with a source term depending only on $\{a'_j\}_{j<l}$. Namely, we require that
\begin{equation}\label{transport-equations}
    \begin{cases}
    Ta'_0 &= 0, \\
    Ta'_1 & = Qa'_0 - \frac{ A_{0}^{'(1)} - B_0^{'(0)}}{\p_t\varphi}, \\
    Ta'_{l+1} & =Qa'_l -  \frac{\sum_{j=0}^{l} (A_{j}^{'(1)} -  B_{j}^{'(0)})^{(l-j)} +\sum_{j=0}^{l-1} C_j^{'(l-1-j)}}{\p_t\varphi} ,
    \end{cases}
\end{equation}
where all equalities must be understood as holding to infinite order at $(z(\sigma),\sigma)$. We can ensure that each solution $a'_l$ is supported in a small neighbourhood of $(z(\sigma),\sigma)$ by multiplying with a suitable cutoff function $\eta$ which is equal to $1$ close to $(z(\sigma),\sigma)$. Observe that due to the local nature of the operator this does not change the fact that the new amplitudes $\eta a'_l$ solve the transport equations above to infinite order at $(z(\sigma),\sigma)$. This ensures that
$$ I_1 = (-ik) (e^{ik\varphi(z,0)} a'(z,0)-e^{ik\varphi(z,T)} a'(z,T)) + O(k^{-\infty}). $$

We now turn to the second integral $I_2$ in the formula for $Pu$. Here we will require that the coefficients to all powers of $k$ vanish, which translates in a sequence of Volterra equations in the unknowns $a''_l$, with source terms depending on the functions $\{a'_j\}_{j\leq l}$ and $\{a''_j\}_{j< l}$. Because we have already solved the transport equations for the functions $a'_l$, we can find the $a''_l$ by solving the corresponding Volterra equations recursively. The transport and Volterra equations are solved as in the proof of Proposition \ref{geom-optics}, with the caveat that now the equations are intended to hold to infinite order on $(z(\sigma),\sigma)$. This implies that
$$ Pu = (-ik) (e^{ik\varphi(z,0)} a'(z,0)-e^{ik\varphi(z,T)} a'(z,T)) + O(k^{-N}).$$

This proves the existence of quasimodes in the general case. 

\textbf{Step 6 (Final estimate).} If we let 
$$r_N:= Pu - (-ik) (e^{ik\varphi(z,0)} a'(z,0)-e^{ik\varphi(z,T)} a'(z,T)),$$
we have proved that $|r_N|= O(k^{-N})$. Therefore, the $L^2$ estimate $\norm{r_N}_{ L^2((0,T), L^2(\Omega'))} = O(k^{-N})$
follows immediately by the boundedness of $\Omega'\times(0,T)$.
\end{proof}

Next, we shall use the above results to prove that propagation of singularities holds for our operator. We begin with a short reminder of some semiclassical concepts, following \cite{Zworski}. Let $h$ be a small parameter (in our case, we will assume $h:= k^{-1}$), and assume that $X\subset\mathbb R^n$ is open. A family $\{u_h\}_{h\in(0,1]}\subset L^2_{loc}(X)$ is said to be \emph{$L^2$-tempered} if for any $\chi\in C^\infty_c(X)$ there exists $N\geq 0$ such that $\|\chi u_h\|_{L^2}=O(h^{-N})$ as $h\rightarrow 0$. Let $\mathcal F_h$ denote the \emph{semiclassical Fourier transform} given by $$ \mathcal F_hf(\xi):=(2\pi h)^{-n}\int_{\mathbb R^n}e^{-i h^{-1}x\cdot\xi}f(x)dx.$$ We say that $u_h$ is \emph{semiclassically smooth} at $(x_0,\xi_0)\in T^*X$ if in some local coordinates near $x_0$ (the definition is independent of the choice of local coordinates) there exist $\phi,\psi\in C^\infty_c$ such that $$ \phi=1 \mbox{ near } x_0,\quad \psi=1 \mbox{ near } \xi_0,\quad \psi\mathcal F_h(\phi u_h)=O_{L^2}(h^\infty).$$ The \emph{semiclassical wave front set} $\WF_{scl}(u_h)$ is then defined as the complement of the set of those points $(x_0,\xi_0)\in T^*X$ where $u_h$ is semiclassically smooth. We will make use of the following characterization of the semiclassical wavefront set:

\begin{Lemma}{\cite[Proposition 2.3]{OSSU}}
    Let $u$ be $L^2$-tempered in $X$. If $z_0\in X$, one has $(z_0,\zeta_0)\notin WF_{scl}(u)$ if and only if
$$ ( u, e^{ik\Psi(\cdot;z,\zeta)}b )_{L^2(Z)} = O(k^{-\infty}) \text{ as $k \to \infty$}$$
uniformly over $(z,\zeta)$ near $(z_0,\zeta_0)$, where $\Psi,b$ are smooth functions such that in local coordinates near $z_0$ it holds
$$ \Psi(y;z,\zeta) = \zeta\cdot(y-z) + \frac{i}{2}|y-z|^2, \qquad \mbox{supp}(b) \mbox{ close to } z_0, \qquad b=1 \mbox{ near } z_0. $$
\end{Lemma}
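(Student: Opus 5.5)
The plan is to identify the pairing in the statement with an FBI (Bargmann-type) transform and then prove both implications by Plancherel together with Gaussian localization. Write $h=k^{-1}$. Then
$$(u,e^{ik\Psi(\cdot;z,\zeta)}b)_{L^2} = \int e^{-ih^{-1}\zeta\cdot(y-z)}e^{-|y-z|^2/(2h)}\,\ol{b(y)}\,u(y)\,dy =: T_h(\ol b u)(z,\zeta),$$
which is, up to the factor $e^{ih^{-1}\zeta\cdot z}$ and normalisation, the semiclassical FBI transform of $\ol b u$. One computes
$$\mathcal F_h\big(e^{ih^{-1}\zeta\cdot y}e^{-|y-z|^2/(2h)}\big)(\eta) = c_n h^{-n/2}e^{-ih^{-1}(\eta-\zeta)\cdot z}e^{-|\eta-\zeta|^2/(2h)},$$
so that by Plancherel
$$T_h(\ol b u)(z,\zeta) = C h^{n/2}\int \mathcal F_h(\ol b u)(\eta)\,e^{ih^{-1}(\eta-\zeta)\cdot z}e^{-|\eta-\zeta|^2/(2h)}\,d\eta.$$
All estimates will come from this formula and from its inverse. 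Since the statement is local, we work throughout in local coordinates near $z_0$.

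\textbf{Step 1 (semiclassically smooth implies $O(k^{-\infty})$ pairing).} Suppose $(z_0,\zeta_0)\notin WF_{scl}(u)$, with cutoffs $\phi$ and $\psi$ as in the definition. Shrinking $\operatorname{supp} b$ if necessary, and using the standard fact that multiplication by a smooth function does not enlarge $WF_{scl}$ (proved by writing $\mathcal F_h(bv)$ as a convolution and using the rapid decay of $\hat b$ at scale $h$), we may assume that $\psi\,\mathcal F_h(\ol b u)=O_{L^2}(h^\infty)$, with $\psi=1$ on a ball $B(\zeta_0,2\delta)$.

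For $|\zeta-\zeta_0|<\delta$ we split the $\eta$-integral into two regions.
\begin{itemize}
\item On $B(\zeta_0,2\delta)$ the factor $\mathcal F_h(\ol bu)$ is $O(h^\infty)$ in $L^2$, while the Gaussian factor has $L^2$ norm $O(h^{n/4})$.
\item On the complement we have $|\eta-\zeta|\ge\delta$, so the Gaussian factor is bounded by $e^{-\delta^2/(4h)}e^{-|\eta-\zeta|^2/(4h)}$. Temperedness gives $\|\mathcal F_h(\ol bu)\|_{L^2}=O(h^{-N-n/2})$, which is absorbed by the exponential decay.
\end{itemize}
Both bounds are uniform in $(z,\zeta)$ near $(z_0,\zeta_0)$, so the pairing is $O(k^{-\infty})$ uniformly.

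\textbf{Step 2 ($O(k^{-\infty})$ pairing implies semiclassically smooth).} For the converse we use the FBI inversion formula
$$\ol b u = c_n h^{-3n/2}\iint T_h(\ol bu)(z,\zeta)\,e^{ih^{-1}\zeta\cdot(\cdot-z)}e^{-|\cdot-z|^2/(2h)}\,dz\,d\zeta,$$
which follows from the Gaussian identity in Step 1 together with Fourier inversion. We choose $\phi$ supported in a small ball around $z_0$ on which $b=1$, so that $\phi u=\phi\,\ol b u$ (taking $b$ real), and $\psi$ supported in a small ball around $\zeta_0$. We then estimate $\psi\,\mathcal F_h(\phi\,\ol b u)$ by splitting the $(z,\zeta)$-integral into three regions.
\begin{itemize}
\item \emph{Near region:} $(z,\zeta)$ close to $(z_0,\zeta_0)$. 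Here $T_h(\ol bu)=O(h^\infty)$ by hypothesis, uniformly, and the integration region is compact, so the contribution is $O(h^\infty)$.
\item \emph{$z$ far from $z_0$:} $|z-z_0|\ge\delta$. On $\operatorname{supp}\phi$ the Gaussian $e^{-|y-z|^2/(2h)}$ is $O(e^{-c/h})$.
\item \emph{$\zeta$ far from $\zeta_0$, with $z$ near $z_0$:} $|\zeta-\zeta_0|\ge\delta$. The semiclassical Fourier transform of $\phi(y)e^{ih^{-1}\zeta\cdot y}e^{-|y-z|^2/(2h)}$ at $\eta\in\operatorname{supp}\psi$ is $O\big(h^\infty\langle\zeta\rangle^{-\infty}\big)$. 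This follows by repeated integration by parts in $y$ with the operator $\frac{h}{i}\frac{(\zeta-\eta)\cdot\nabla_y}{|\zeta-\eta|^2}$, which is legitimate because $|\zeta-\eta|\ge\delta/2$. Alternatively, it follows from the explicit Gaussian formula after completing the square.
\end{itemize}
In the two far regions, the polynomial bound $|T_h(\ol bu)(z,\zeta)|\le C h^{-N'}$, which comes from temperedness of $u$, is beaten by these $O(e^{-c/h})$ or $O(h^\infty\langle\zeta\rangle^{-\infty})$ factors. The decay in $\langle\zeta\rangle$ makes the noncompact $\zeta$-integral converge.

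\textbf{Main obstacle.} I expect the hardest part to be the $\zeta\to\infty$ part of the third region in Step 2. There one must obtain decay that is uniform in $\zeta$ while keeping the derivatives created by integration by parts, which fall on $\phi$ and on the Gaussian and produce powers $h^{-1/2}|y-z|/h^{1/2}$, under control. I would first try completing the square exactly, which gives an explicit bound of the form $e^{-c|\zeta-\eta|^2/h}$, and fall back on the integration-by-parts argument only if the cutoff $\phi$ obstructs the exact computation. In that case the amplitude $|y-z|^m h^{-m}e^{-|y-z|^2/(2h)}$ is bounded by $h^{-m/2}$, which is absorbed by the factor $h^{m}$ gained from the integrations by parts.
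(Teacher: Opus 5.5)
Your proposal is correct and follows the same route as the source. The paper gives no proof of its own and cites \cite[Proposition 2.3]{OSSU}, which identifies the pairing with the FBI transform of $\bar b u$ and invokes the standard FBI characterization of $\WF_{scl}$; you reprove that characterization directly via Plancherel, FBI inversion and Gaussian localization.

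One detail needs tightening, in the region $|z-z_0|\ge\delta$ of Step 2. There $\zeta$ ranges over all of $\mathbb R^n$, so a $\zeta$-uniform bound on $T_h(\bar b u)$ times $O(e^{-c/h})$ is not integrable in $\zeta$. Apply in this region the same integration by parts in $y$ that you use in the third region: keeping half of the Gaussian, it gives a bound $C_M h^{-n}\langle\zeta\rangle^{-M}e^{-c|z-z_0|^2/h}$ for $\eta\in\operatorname{supp}\psi$. Alternatively, use Cauchy--Schwarz in $\zeta$, since $T_h(\bar b u)(z,\cdot)\in L^2$.
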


\begin{proof}[Proof of Theorem \ref{propagation}]

Let $u$ be $L^2$-tempered in $M':= (-S',T')\times\Omega'$ with $S'<0<T<T'$, and assume that $(z_0,\zeta_0) \notin WF_{scl}(u)$. Recall that in our notation $\gamma(\sigma) := (z(\sigma),\zeta(\sigma))$ is the bicharacteristic curve through $(z_0,\zeta_0)$, which ends at $(z_T,\zeta_T)$. Given $(\widetilde z_T,\widetilde \zeta_T)$ close to $(z_T,\zeta_T)$, let $\widetilde\gamma$ be the bicharacteristic through $(\widetilde z_T,\widetilde \zeta_T)$, which begins at $(\widetilde z_0,\widetilde\zeta_0)$, close to $(z_0,\zeta_0)$. Let $b$ be any smooth function supported close to $\widetilde z_T$ such that $b=1$ near $\widetilde z_T$. We want to show that
$$ ( u, e^{ik\Psi(\cdot;\widetilde z_T,\widetilde \zeta_T)}b ) = O(k^{-\infty}) $$
holds uniformly with respect to the choice of $\widetilde \gamma$. We make use of our quasimode construction. Let $\widetilde a'$ be the solution to the backwards transport equation for the first amplitude term, such that for its final value it holds $b= ike^{ik(\widetilde\varphi(\cdot,T)- \Psi(\cdot; \widetilde\gamma(T)) )}\widetilde a'(\cdot,T)$. We define
$$\widetilde b(\cdot,\sigma):= ike^{ik(\widetilde\varphi(\cdot,\sigma)- \Psi(\cdot; \widetilde\gamma(\sigma)) )}\widetilde a'(\cdot,\sigma),$$
so that $\widetilde b(\cdot, T)=b$. Using Theorem \ref{GO-construction-general} and a Borel summation argument as in \cite[Th. 4.15]{Zworski}, we can find a solution $\widetilde v$ to
\begin{align*}
    P^*\widetilde v &= O(k^{-\infty}) + ik \big( e^{ik\widetilde \varphi(\cdot,T)}\widetilde a'(\cdot,T) - e^{ik\widetilde \varphi(\cdot,0)}\widetilde a'(\cdot,0) \big)
    \\ & =
    O(k^{-\infty}) + e^{ik\Psi(\cdot,\widetilde\gamma(T))}\widetilde b(\cdot,T) - e^{ik\Psi(\cdot,\widetilde \gamma(0))}\widetilde b(\cdot,0)
    \\ & =
    O(k^{-\infty}) + e^{ik\Psi(\cdot,\widetilde\gamma(T))}b -e^{ik\Psi(\cdot,\widetilde \gamma(0))}\widetilde b(\cdot,0).
\end{align*} 
Hence
$$ ( u, e^{ik\Psi(\cdot;\widetilde z_T,\widetilde \zeta_T)}b ) = O(k^{-\infty})  + ( u, P^*\widetilde v ) + (u,e^{ik\Psi(\cdot,\widetilde z_0,\widetilde \zeta_0)}\widetilde b(\cdot,0)) = O(k^{-\infty}),$$
where we used the fact that $(u,P^*\widetilde v) = (Pu,\widetilde v) =0$ and the assumption that $(z_0,\zeta_0)$ is not in the wavefront set of $u$. In order to check that this equality holds uniformly over $(\widetilde z_T,\widetilde \zeta_T)$ close to $(z_T,\zeta_T)$, we have to make sure that the function $\widetilde b$ and the coefficients of the $O(k^{-\infty})$ terms can be uniformly bounded for little variations of the bicharacteristic $\widetilde\gamma$, upon which they depend. Observe that the phase function $\widetilde \varphi$ and the amplitude $\widetilde a'$ appearing in $\widetilde b$ respectively arise as solutions to the matrix Riccati and transport equations. As discussed in \cite[Section 1.5]{Tay11}, see also \cite[Section 6]{MSS}, one sees that for all $N\in\mathbb N$ there exist constants $C, C'>0$ depending only on $c, G, T$ and $N$ such that the estimates
$$ \|\widetilde \varphi\|_{C^N}\leq C, \qquad \|\widetilde a'\|_{C^N}\leq C' $$
holds uniformly over $\widetilde\gamma$. We also want to show that the $C^N$-norm of $\widetilde a''$ is uniformly bounded. In order to do so, we come back to the estimate of $\widetilde a''$ as solution to the Volterra integral equation (see Step 4 in the proof of Proposition \ref{geom-optics}). We have
$$ \|\widetilde a''\|_{C^N} \lesssim \|f\|_{C^N}(1+\|R\|_{C^N}), $$
where the resolvent kernel $R$ and the source term $f$ verify $$\|R\|_{C^N}\lesssim \|\widetilde \varphi\|_{C^N}, \qquad \|f\|_{C^N} \lesssim \|\widetilde \varphi\|_{C^N} + \|\widetilde a'\|_{C^N}, $$
with the implied constants depending only on $c, G, T$ and $N$. Thus the uniform bound for $\widetilde a''$ follows from those for $\widetilde \varphi$ and $\widetilde a'$.
 One immediately sees by inspection that the terms in $O(k^{-\infty})$, which all arise either from $\widetilde\varphi, \widetilde a', \widetilde a''$, or from the fixed functions in the operator $P$, can also all be uniformly bounded with respect to $\widetilde\gamma$. This proves that $(z_T,\zeta_T)\notin WF_{scl}(u)$, which concludes the proof. 
 \end{proof}

 \begin{Remark}\label{stationary-sing}
     Besides the propagating singularities described above, which are analogous to the ones observable in the case of the wave equation with no memory term, solutions to the viscoacoustic wave equation are also know to present \emph{stationary singularities} (see Figure~\ref{fig:singularities}). Classical references of this fact can be found e.g. in \cite{HR84} for the one-dimensional case, for which one is able to explicitly compute solutions with stationary singularities, and in \cite{Ki94} for the general case (see also the more recent work \cite{BO22} on the related fractional Zener wave equation). Stationary singularities do not propagate: they are generated at $t=0$ and remain in the same position for all times. Thus, despite the fact that $(z_0,\zeta_0)\notin$ WF$(u)$, it is indeed still possible that there will be a stationary singularity at $z_T$. However, this does not contradict our previous result, given that the direction $\zeta$ of such singularity will be purely horizontal (that is, $\zeta=(\xi,0)$) due to its stationary nature, and thus it can not coincide with $\zeta_T$, which is bound to be orthogonal to the bicharacteristic $\gamma$. Propagating and stationary singularities are thus easily distinguishable by their directions. In our study of the inverse problem for the viscoacoustic wave equation we only make use of propagating singularities.
 \end{Remark}

\begin{center}
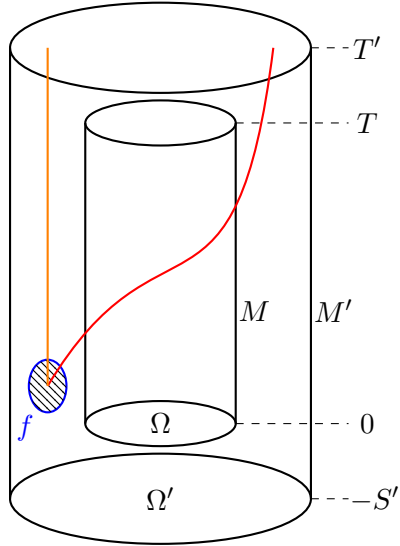
\begin{figure}[h]
     \begin{tikzpicture}

  \draw[thick] (0,1) ellipse (2 and 0.6); 
  \draw[thick] (-2,1) -- (-2,-5);          
  \draw[thick] (2,1) -- (2,-5);            
  \draw[thick] (0,-5) ellipse (2 and 0.6); 

  \draw[thick] (0,0) ellipse (1 and 0.3); 
  \draw[thick] (-1,0) -- (-1,-4);                      
  \draw[thick] (1,0) -- (1,-4);                        
  \draw[thick] (0,-4) ellipse (1 and 0.3);             

  \draw[dashed] (2,-5) -- (2.5,-5);
  \draw[dashed] (2,1) -- (2.5,1);
  \draw[dashed] (1,0) -- (2.5,0);
  \draw[dashed] (1,-4) -- (2.5,-4);

  \draw[blue,thick] (-1.5,-3.5) ellipse (0.25 and 0.35); 
  \fill[blue, pattern=north west lines] (-1.5,-3.5) ellipse (0.25 and 0.35);
  \draw[red,thick]
    (-1.5,-3.5) .. controls (0,-1) and (1,-3) .. (1.5,1);
  \draw[orange,thick]
    (-1.5,-3.5) -- (-1.5,1);

  \node at (0,-5) {$\Omega'$};
  \node at (0,-4) {$\Omega$};
  \node at (2.3,-2.5) {$M'$};
  \node at (1.25,-2.5) {$M$};
  \node at (-1.8,-4.05) {{\color{blue}$f$}};
  \node at (2.85,-5) {$-S'$};
  \node at (2.75,1) {$T'$};
  \node at (2.75,0) {$T$};
  \node at (2.75,-4) {$0$};

\end{tikzpicture}
\caption{Represented in red is a propagating singularity. An example of a stationary singularity is given in orange.}
\label{fig:singularities}

\end{figure}
\end{center}

 \section{Well-posedness of the direct problem and source-to-solution map} \label{sec_wellposedness}
 
 We study the well-posedness of the following problem:
 \begin{equation}\label{eq:initial-value-problem}
     \begin{cases}
         Pu = f, & \mbox{ in } \Omega\times[0,T]=: M, \\
         u(0) = u_0, \quad \dot u(0) = \dot u_0, & \mbox{ in } \Omega.
     \end{cases}
 \end{equation}
 For this we follow \cite[Appendix B]{dHLN} and \cite[Theorem 2.2]{Dafermos}. Observe that, as in the first reference, we shall define
 $$\mathbf C u(\cdot,t) := \nabla\cdot (c(\cdot)\nabla u(\cdot,t)), \qquad \mathbf G u(\cdot,t,s) := \nabla\cdot(G(\cdot,t-s)\nabla u(\cdot,s)), $$
 so that our equation $Pu=f$ can be rewritten in the form
$$ \p^2_t u(x,t) = \mathbf Cu(x,t) -\int_0^t \mathbf Gu(x,t,s)ds + f(x,t), $$
 as in the second reference. We make the following assumptions on the coefficients $\mathbf C, \mathbf G$ (see beginning of Section 2 in \cite{Dafermos}):
 \begin{itemize}
     \item $\mathbf C(t), \mathbf C_t(t) \in L^\infty([0,T], \mathcal L(H^1_0(\Omega), H^{-1}(\Omega)))$,
     \item $\langle \mathbf C(t) v, w\rangle = \langle \mathbf C(t) w, v\rangle, \quad \mbox{ for all } v,w\in H^1_0(\Omega),$
     \item there exists a constant $K>0$ such that
     $$ -\langle \mathbf C(t) v, v\rangle \geq K\|v\|_{H^1(\Omega)}^2, \quad \mbox{ for all } v\in H^1_0(\Omega),  $$
     \item $\mathbf G(t,s), \mathbf G_t(t,s) \in L^\infty([0,T]\times[0,T], \mathcal L(H^1_0(\Omega), H^{-1}(\Omega)))$.
 \end{itemize}
 Under these assumptions, we have
 \begin{Proposition}[Well-posedness, Theorems 2.1 and 2.2 in \cite{Dafermos}]\label{prop:well-posedness}
     Let $f\in L^2([0,T],{ H^{-1}(\Omega)})$, and $u_0=\dot u_0=0$. There exists a unique generalized solution $u\in L^2([0,T], H^1_0(\Omega)) \cap H^1([0,T], L^2(\Omega))$ to \eqref{eq:initial-value-problem}. Moreover, the following estimate holds:
     $$ \|u\|_{L^2([0,T], H^1_0(\Omega))} \lesssim \| f \|_{L^2([0,T],{ H^{-1}(\Omega)})}. $$
 \end{Proposition}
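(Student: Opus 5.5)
We plan to prove Proposition \ref{prop:well-posedness} by the Faedo--Galerkin method combined with an energy estimate in which the memory term is handled through integration by parts in time and Gronwall's inequality; this is essentially the argument of \cite[Theorems 2.1, 2.2]{Dafermos}, which we adapt to the present notation. First we check that the hypotheses listed above hold for our operators. Since $c$ is smooth and positive on $\ol\Omega$, $-\langle \mathbf C v,v\rangle=\int_\Omega c|\nabla v|^2\geq K\|v\|^2_{H^1}$ by Poincar\'e's inequality on $H^1_0(\Omega)$. $\mathbf C$ is time independent and symmetric. Since $G\in C^\infty(M)$, both $\mathbf G(t,s)$ and $\mathbf G_t(t,s)$ are bounded from $H^1_0$ to $H^{-1}$, uniformly in $t,s$.

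\textbf{Galerkin scheme.} Let $\{w_j\}$ be the Dirichlet eigenfunctions of $-\Delta$ on $\Omega$, and let $u_m(t)=\sum_{j\le m} d_{jm}(t)w_j$ solve the projected system
\[
\langle \ddot u_m,w_j\rangle = \langle \mathbf C u_m,w_j\rangle-\int_0^t\langle \mathbf G(t,s)u_m(s),w_j\rangle ds+\langle f,w_j\rangle,
\]
with zero initial data. This is a linear Volterra integro-differential system of ODEs with continuous kernels, so it has a unique solution on $[0,T]$ (for instance via Step 4 of Proposition \ref{geom-optics}, applied to the equivalent first order system).

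\textbf{Energy estimate (main step).} We test with $\dot u_m$ and set $E_m(t)=\tfrac12\|\dot u_m\|_{L^2}^2-\tfrac12\langle \mathbf C u_m,u_m\rangle$, which is equivalent to $\|\dot u_m\|^2_{L^2}+\|u_m\|^2_{H^1}$. The difficulty is the memory term $\int_0^t\int_0^\tau\langle \mathbf G(\tau,s)u_m(s),\dot u_m(\tau)\rangle\,ds\,d\tau$, because $\dot u_m$ is not controlled in $H^1$. We integrate by parts in $\tau$, moving the time derivative onto the kernel. This produces the boundary term $\int_0^t\langle \mathbf G(t,s)u_m(s),u_m(t)\rangle ds$, the diagonal term $\int_0^t\langle \mathbf G(\tau,\tau)u_m(\tau),u_m(\tau)\rangle d\tau$, and the remainder $\int_0^t\int_0^\tau\langle \mathbf G_t(\tau,s)u_m(s),u_m(\tau)\rangle ds\,d\tau$. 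Each of these is bounded by the assumed $L^\infty$ bounds on $\mathbf G$ and $\mathbf G_t$. The boundary term is estimated by Young's inequality as $\epsilon\|u_m(t)\|^2_{H^1}+C_\epsilon\int_0^t\|u_m\|_{H^1}^2$, and the $\epsilon$ part is absorbed into $E_m(t)$. The source term $\int_0^t\langle f,\dot u_m\rangle$ also needs care, since $f$ is only $H^{-1}$-valued. For it we first use the estimate with $f\in L^2(L^2)$ and then pass to $H^{-1}$ data by density and duality (transposition), as Dafermos does in Theorem 2.2; alternatively we integrate by parts in time when $f\in H^1(H^{-1})$. Gronwall then gives $\sup_t E_m(t)\lesssim\|f\|^2$, uniformly in $m$.

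\textbf{Passage to the limit.} The uniform bounds give a subsequence with $u_m\rightharpoonup u$ weakly-$*$ in $L^\infty(H^1_0)$ and $\dot u_m\rightharpoonup\dot u$ in $L^\infty(L^2)$. The equation is linear, so it passes to the limit in the weak (generalized) sense, and weak lower semicontinuity gives the estimate $\|u\|_{L^2(H^1_0)}\lesssim\|f\|_{L^2(H^{-1})}$. For uniqueness we take the difference of two solutions, which solves the homogeneous problem. Ladyzhenskaya's test function $\int_t^{s}w$ (or the same energy argument at the level of generalized solutions) together with Gronwall shows that it vanishes. We expect the energy estimate above, with the memory term and the low regularity of $f$, to be the main obstacle.
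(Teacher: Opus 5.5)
There is a genuine gap, exactly where you expected it: the bound for $\int_0^t\langle f,\dot u_m\rangle\,d\tau$ when $f$ is only in $L^2([0,T],H^{-1}(\Omega))$. The energy $E_m$ controls $\dot u_m$ only in $L^2(\Omega)$, so this pairing is not controlled by $E_m$. None of your three repairs gives a bound in terms of $\|f\|_{L^2([0,T],H^{-1})}$.

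\emph{Density} presupposes exactly that a priori estimate. \emph{Transposition} (solve the backward adjoint problem with $L^1([0,T],L^2)$ data by your energy method, then dualize) does handle $H^{-1}$-valued sources, but one derivative lower. It gives $\|u\|_{L^\infty([0,T],L^2(\Omega))}\lesssim\|f\|_{L^1([0,T],H^{-1}(\Omega))}$, not membership in $L^2([0,T],H^1_0)\cap H^1([0,T],L^2)$. \emph{Integration by parts in time} needs $\partial_t f\in L^2([0,T],H^{-1})$. It produces a bound by $\|f\|_{H^1([0,T],H^{-1})}$, which is a stronger hypothesis and a different statement.

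This cannot be repaired, because the claimed estimate already fails for $c\equiv 1$, $G\equiv 0$, which satisfy all the listed hypotheses. Let $w_j$ be $L^2$-normalized Dirichlet eigenfunctions of $-\Delta$ with eigenvalues $\lambda_j^2$, so that $\|w_j\|_{H^{-1}}\le\lambda_j^{-1}$ and $\|w_j\|_{H^1}\ge\lambda_j$. The resonant source $f_j:=-\tfrac12\lambda_j\sin(\lambda_j t)\,w_j$ satisfies $\|f_j\|_{L^2([0,T],H^{-1})}\le\tfrac12\sqrt{T}$ for all $j$. However, the solution of $Pu=f_j$ with vanishing Cauchy data is
\[
u_j(x,t)=\tfrac12\bigl(\lambda_j^{-1}\sin(\lambda_j t)-t\cos(\lambda_j t)\bigr)\,w_j(x),
\]
whose $L^2([0,T],H^1_0)$ norm grows like $\lambda_j T^{3/2}$. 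Superposing these modes with weights $h_j$ satisfying $\sum_j h_j^2<\infty=\sum_j \lambda_j^2 h_j^2$ gives an $f\in L^2([0,T],H^{-1})$ with no solution in $L^2([0,T],H^1_0)$ at all.

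So neither your Galerkin scheme nor the paper's route can establish the Proposition with $H^{-1}$-valued sources; the obstruction lies in the statement, not the method. The paper's route is Lax--Milgram for the weighted form $B_{t_0}$ on short time intervals, iterated finitely many times.

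What your argument does prove correctly is the version with $f\in L^2([0,T],L^2(\Omega))$. The Galerkin system, your integration by parts in $\tau$ of the memory term with absorption of the boundary term, and Gronwall all go through, and even give $u\in C([0,T],H^1_0)\cap C^1([0,T],L^2)$. This is a legitimate alternative to the paper's abstract Lax--Milgram argument, with the advantage that it produces the estimate and time continuity explicitly.

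For the use in the proof of Proposition \ref{prop:from-cauchy-data-toscattering-relation}, $(P_2-P_1)u_1$ is only $H^{-1}$-valued. There one would have to rely on the weaker transposition bound above, which appears to suffice for the $L^2$-temperedness needed.
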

The proof of existence and uniqueness is obtained by applying the Lax-Milgram theorem on small time intervals. Given $t_0\in(0,T)$, one defines the bilinear form
$$ B_{t_0}(u,v) := \int_{0}^{t_0}(t-t_0)\left( \langle \dot u, \ddot v \rangle + \langle \nabla\cdot (c \nabla u ), v \rangle - \int_0^t \langle \nabla\cdot(G(t-s)\nabla u(s)), \dot v \rangle ds  \right)dt + \int_{0}^{t_0}\langle \dot u,\dot v\rangle dt, $$
where for the sake of simplicity we suppressed all dependencies on the variables $x,t$. Using the assumptions for $\mathbf C, \mathbf G$, one sees that $B_{t_0}$ is both coercive for small values of $t_0$, and bounded. Using the Riesz representation theorem, this guarantees the existence of a unique generalized solution in the time interval $[0,t_0]$, with the well-posedness estimate  
 $$ \|u\|_{L^2([0,t_0], H^1_0(\Omega))} \lesssim \| f \|_{L^2([0,t_0],{ H^{-1}(\Omega)})}. $$
The final result and estimate are obtained by repeating this argument a finite amount of times. 

 Consider now two spacetime cylinders $M\Subset M'$ with $M:=[-S,T]\times\Omega$, $M':=[-S',T']\times\Omega'$. The source-to-solution map associated to the operator $P$ and cylinders $M,M'$ is the map
 $$\mathcal S: C^\infty_c(M'\setminus M)\times H^1_0(\Omega')\times L^2(\Omega') \rightarrow L^2([-S',T'], H^1(\Omega'\setminus\Omega)) $$
 such that
 $$\mathcal S(f,u_0,\dot u_0) := u|_{M'\setminus M},$$
 where $u\in L^2([-S',T'], H^1_0(\Omega')) \cap H^1([-S',T'], L^2(\Omega'))$ is the unique solution to the problem
 \begin{equation}\label{eq:initial-value-problem-M'}
     \begin{cases}
         Pu = f, & \mbox{ in }  M', \\
         u(-S') = u_0, \quad \dot u(-S') = \dot u_0, & \mbox{ in } \Omega',
     \end{cases}
 \end{equation}
 whose existence is granted by Proposition \ref{prop:well-posedness}.

\section{Inverse problem} \label{sec_inverse}

 Let $\alpha_j$, $j=1,2$ be the scattering relation corresponding to sound speed $c_j$, which is defined for all $(z,\zeta)\in \p^\pm_j(T^*M):= \{(z,\zeta)=(x,t,\xi,\tau) \;|\; q_j(z,\zeta) := -(\tau^2 - c_j(x)|\xi|^2)=0, \tau_{j,\pm} (z,\zeta)=0\}$ as 
 $$ \alpha_j(z,\zeta) := \gamma_{z,\zeta}(\pm\tau_{j,\pm} (z,\zeta)), $$
 where $M:=[0,T]\times\Omega$ and $\tau_{j,\pm} (z,\zeta)$ is the travel time.
 We start by showing the following proposition:
 \begin{Proposition}\label{prop:from-cauchy-data-toscattering-relation}
     Let $M\Subset M'$ with $M:=[0,T]\times\Omega$, $M':=[-S',T']\times\Omega'$, and $\Omega,\Omega'\subset\mathbb R^n$ open, bounded and strictly convex. Let $c_j, G_j, P_j, \mathcal S_j$, $j=1,2$, respectively be speeds, viscous kernels, the operators defined by $$P_j u(x,t):= -\frac 12\left(\p^2_tu(x,t) -\nabla\cdot (c_j(x) \nabla u(x,t)) + \int_0^t \nabla\cdot(G_j(x,t-s)\nabla u(x,s))ds\right),$$ and the corresponding source-to-solution maps on the spacetime cylinders $M,M'$.
     Assume that $c_1=c_2$ to infinite order on $\p\Omega$, and that $G_1=G_2$ to infinite order on $\p M$. Moreover, assume that $\mathcal S_1 = \mathcal S_2$ holds. Then the scattering relations coincide, that is $\alpha_1=\alpha_2$. As a consequence, the travel times also coincide, i.e. $\tau_{1,\pm} = \tau_{2,\pm}$.
 \end{Proposition}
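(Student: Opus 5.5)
The strategy is to read off the scattering relation from the semiclassical wave front set of exterior data, using Theorem \ref{propagation} together with the quasimodes of Theorem \ref{GO-construction-general}. We fix a covector $(z_0,\zeta_0)\in \p^-_1(T^*M)$, with $z_0\in\p M$ and $\zeta_0=(\xi_0,1)$ null for $q_1$ and pointing into $M$. Since $c_1=c_2$ to infinite order on $\p\Omega$ and outside $\Omega$, the bicharacteristics of $q_1,q_2$ agree up to the entry point. We continue the bicharacteristic backwards into $M'\setminus M$ to a point $(z_{-},\zeta_{-})$ with $z_-$ in the exterior and $t>-S'$.

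\textbf{Step 1 (source).} Apply Theorem \ref{GO-construction-general} to a short segment of this bicharacteristic lying entirely in $M'\setminus M$, with $\Omega'$ playing the role of the ambient set. By \eqref{pu_formula}, $f_k:=Pu_k$ equals, modulo $O(k^{-\infty})$, two wave packets localized at the endpoints of the segment, and it is supported in $M'\setminus M$ after cutoffs. Such $f_k$ is admissible up to an $O(k^{-N})$ error, which Proposition \ref{prop:well-posedness} controls. Let $w_k^j$ be the solution for $P_j$ with source $f_k$. The difference $w^j_k-u_k$ solves $P_j(\cdot)=$ a source concentrated at the final endpoint $(z_-',\zeta_-')$ of the segment. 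Hence $w^j_k$ is $L^2$-tempered, and its wave front set in $\{t>t(z_-')\}$ is generated by the single covector $(z_-',\zeta_-')$.

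\textbf{Step 2 (propagation).} By Theorem \ref{propagation}, applied on subcylinders where $Pw^j_k=O(k^{-\infty})$, and by the lemma characterizing $\WF_{scl}$, the points of $\WF_{scl}(w^j_k)$ with $\tau\ne0$ lie exactly on the $q_j$-bicharacteristic through $(z_-',\zeta_-')$. They do lie on it, because the Gaussian beam has nonvanishing principal amplitude: we solve the transport equation with nonzero data. Stationary singularities have $\tau=0$ and are excluded, as in Remark \ref{stationary-sing}. The bicharacteristic crosses $M$ and exits at $\alpha_j(z_0,\zeta_0)$, then continues in $M'\setminus M$, where the metrics agree. Strict convexity of $\Omega$ ensures it does not return into $M$ within the time window.

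\textbf{Step 3 (comparison).} Since $\mathcal S_1(f_k)=\mathcal S_2(f_k)$, the restrictions $w^1_k|_{M'\setminus M}$ and $w^2_k|_{M'\setminus M}$ coincide, and therefore so do their wave front sets over the open exterior. If $\alpha_1(z_0,\zeta_0)\ne\alpha_2(z_0,\zeta_0)$, the exterior continuations of the two bicharacteristics are different curves, by uniqueness of ODE solutions in the region where $c_1=c_2$. One of them would then carry semiclassical wave front for $w^1_k$ but not for $w^2_k$, a contradiction. Thus $\alpha_1=\alpha_2$. Since the time component of the exit point $\alpha_j$ is $t_0\pm\tau_{j,\pm}$, this gives $\tau_{1,\pm}=\tau_{2,\pm}$.

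The main obstacle is the nonvanishing direction of Step 2: proving that the singularity is actually present along the whole bicharacteristic, rather than merely confined to it. Theorem \ref{propagation} is an ``if and only if'', so it suffices to show that $(z_-',\zeta_-')\in\WF_{scl}(w^j_k)$. This follows from pairing $w^j_k$ with a wave packet at that point: the pairing equals the pairing with $u_k$ plus the contribution of the endpoint source, whose leading term $\sim k^{-n/2}$ is nonzero by the choice of amplitude. A further point needing care is that $G_j$ only coincide outside $\Omega$ while $u_k$ crosses into $M$. This does not affect the wave front statements, since the memory term does not alter bicharacteristics, but uniformity of the $O(k^{-\infty})$ remainders near $\p M$ is where the infinite-order agreement hypotheses are used.
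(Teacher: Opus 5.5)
Your Step 1 does not produce a wave that enters $M$. If $f_k:=Pu_k$, note that $u_k$ is compactly supported near a segment where $P_1=P_2$ and vanishes near $t=-S'$. Uniqueness in Proposition \ref{prop:well-posedness} then gives $w^j_k=u_k$ exactly. If instead $f_k$ is the two-packet main part of \eqref{pu_formula}, you get $w^j_k=u_k+O(k^{-N})$. The packet at the final endpoint enters with the opposite sign and absorbs the beam launched by the initial packet. So $w^j_k-u_k$ solves $P_j(\cdot)=0$ (or $=-r_N$), not an equation with a source at $(z_-',\zeta_-')$. Consequently $\mathcal S_1(f_k)=\mathcal S_2(f_k)$ holds trivially and carries no information about $M$.

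The same example refutes the reduction in your last paragraph. The point $(z_-',\zeta_-')$ belongs to $\WF_{scl}(u_k)$, since it is the endpoint of the beam and the wave front set is closed, yet no later point of the bicharacteristic does. Theorem \ref{propagation} needs $Pw=0$ in a neighbourhood of the curve, so it cannot be started at a covector lying in the wave front set of the source. Presence of the singularity has to be established strictly beyond the source.

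To repair this, use only the first packet as the source, attached to a beam along the whole continued bicharacteristic. Take $u_k$ to be a $P_1$-quasimode along the continuation of $\gamma_1$ that starts at an exterior point after $t=-S'$, crosses $M$, and ends outside $M'$. Put $f_k:=(-ik)e^{ik\varphi(\cdot,0)}a'(\cdot,0)$. Then $P_1(w^1_k-u_k)=O(k^{-N})$ in $M'$ with zero data, so $w^1_k=u_k+O(k^{-N})$. Thus $\WF_{scl}(w^1_k)$ is known exactly, both presence and containment, without any propagation result with sources.

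This also removes your containment claim for $w^2_k$, which Theorem \ref{propagation} (stated for $Pu=0$) does not supply. You only need three facts. First, $w^2_k=w^1_k$ outside $M$ puts the entry point in $\WF_{scl}(w^2_k)$. Second, propagation for $P_2$ along $\gamma_2$ carries it to the $P_2$-exit point and slightly beyond, into the exterior. Third, there it must lie on the $P_1$-curve.

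That is the paper's argument. The paper builds the explicit solution $u_1=v_1+r_1$, with $r_1=O(k^{-\infty})$, from Cauchy data at $t=-S'$, using the version of $\mathcal S$ with initial data from Section \ref{sec_wellposedness}, rather than from a source. Your corrected source-based version has the advantage of using only sources in $M'\setminus M$, which is the hypothesis actually made in Theorem \ref{main}.
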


 For the proof of Proposition \ref{prop:from-cauchy-data-toscattering-relation} we follow \cite[Theorem 1.1]{OSSU}.

 \begin{proof}
  Because $P_1=P_2$ to infinite order on $\p M$, it is possible to smoothly extend the operators $P_1,P_2$ to a slightly larger cylinder $M'' \supset M$, $M'':= \Omega''\times [-S'', T'']$, in such way that $P_1=P_2$ in $M''\setminus M$. For the principal symbols $p_1,p_2$ we have $q_1 = q_2$ in $T^*(M''\setminus M)$.

  Observe that we have $\p^\pm_1(T^*M)=\p^\pm_2(T^*M)$. This is due to the fact that these sets only depend on $c_j$ on $\partial \Omega$, and by assumption we have that $c_1=c_2$ on $\p \Omega$ to infinite order. Thus from now on we will refer to both sets as just $\p^\pm(T^*M)$.

  Let $(z_0,\zeta_0)\in \p^\pm(T^*M)$, and let $\gamma_1:[0,T_1]\rightarrow T^*M\setminus 0$ be the maximal null bicharacteristic of $P_1$ in $M$ such that $\gamma_1(0)=(z_0,\zeta_0)$. Let $\widetilde\gamma_1:[-\widetilde S_1, \widetilde T_1]\rightarrow T^*M''$ be the maximal continuation of $\gamma_1$ to a null bicharacteristic of $P_1$ in $M''$, and let $\gamma_2:[-S_2,T_2]\rightarrow T^*M\setminus 0$ be the maximal null bicharacteristic of $P_2$ in $M$ such that $\gamma_2(0)=(z_0,\zeta_0)$. We will show that either $S_2=0$ or $T_2=0$, and also that
  $$\{\gamma_1(0), \gamma_1(T_1)\} = \{\gamma_2(-S_2), \gamma_2(T_2)\}.$$  This will mean that $\alpha_1 = \alpha_2$ on $\p^\pm(T^*M)$. 

  By Theorem \ref{GO-construction-general}, there exists a quasimode $v_1$ associated to $\widetilde \gamma_1$ in $M''$, such that
  $$\mbox{WF}_{scl}(v_1) = \widetilde \gamma_1([-\widetilde S_1, \widetilde T_1]), \qquad \mbox{WF}_{scl}(P_1v_1) = \widetilde \gamma_1(-\widetilde S_1)\cup \widetilde \gamma_1(\widetilde T_1).$$ 
  In particular $\|P_1v_1\|_{L^2([-S', T'], H^{-1}(\Omega'))} = O(k^{-\infty})$ for all cylinders $M':= \Omega'\times[-S', T']$ such that $M\subset M'\subset M''$, because $\mbox{WF}_{scl}(P_1v_1)$ is away from $\overline{M'}$. Let $r_1{ \in L^2([0,T], H^1_0(\Omega)) \cap H^1([0,T], L^2(\Omega))}$ solve $$P_1r_1 = -P_1v_1$$ in $M'$ with $\|r_1\|_{{  L^2}([-S',T'], H^1_0(\Omega'))}\lesssim \|P_1v_1\|_{L^2([-S', T'], H^{-1}(\Omega'))}$. Observe that the existence of $r_1$ is granted by \cite{Dafermos}, see our Proposition \ref{prop:well-posedness}. Then $$u_1 := v_1|_{M'} + r_1$$ verifies $P_1u_1=0$ in $M'$ and $\mbox{WF}_{scl}(u_1) = \mbox{WF}_{scl}(v_1) = \widetilde \gamma_1([-\widetilde S_1, \widetilde T_1])\cap T^*M'$.

{ 
  Let $u_2\in L^2([0,T], H^1_0(\Omega)) \cap H^1([0,T], L^2(\Omega))$ solve the problem 
  \begin{equation*}
     \begin{cases}
         P_2u_2 = 0, & \mbox{ in }  M', \\
         u_2(-S') = u_1(-S'), \quad \dot u_2(-S') = \dot u_1(-S'), & \mbox{ in } \Omega'.
     \end{cases}
 \end{equation*}
This problem has a unique solution in light of the well-posedness result of Proposition \ref{prop:well-posedness}, since $u_2$ can be written as $u_1 + v_2$, where $v_2$ solves the problem with vanishing Cauchy data and source $P_2(u_2-u_1)$.  
 }
 Because by assumption $\mathcal S_1 = \mathcal S_2$, we must have $u_2|_{M'\setminus M} = u_1|_{M'\setminus M}$. 
  Moreover, by the well-posedness estimate we see that
  \begin{align*}
      \|u_1- u_2\|_{L^2([-S',T'], H^1_0(\Omega'))} & \lesssim 
      \|P_2(u_1- u_2)\|_{L^2([-S',T'], H^{-1}(\Omega'))}
      \\ & =
      \|P_2u_1\|_{L^2([-S',T'], H^{-1}(\Omega'))}
      \\ & =
      \|(P_2-P_1)u_1\|_{L^2([-S',T'], H^{-1}(\Omega'))}
      \\ & \lesssim 
      \|u_1\|_{L^2([-S',T'], H^1_0(\Omega'))} ,
  \end{align*} 
  and thus
  $$\|u_2\|_{L^2([-S',T'], H^1_0(\Omega'))} \lesssim \|u_1\|_{L^2([-S',T'], H^1_0(\Omega'))},$$
  where the last term is $O(k^{2-\frac{n+1}4})$ by \cite[(3.38)]{OSSU}. Thus, we obtain that $u_2$ is $L^2$-tempered. Moreover, because $ u_2|_{M'\setminus M} = u_1|_{M'\setminus M}$, we have
  $$\mbox{WF}_{scl}(u_2|_{M'\setminus M}) = \bigg(\widetilde \gamma_1([-\widetilde S_1,0))\cup \widetilde \gamma_1((T_1,\widetilde T_1])\bigg)\cap T^*(M'\setminus M),$$
  and since the wavefront set is closed we deduce $\gamma_1(0), \gamma_1(T_1)\in \mbox{WF}_{scl}(u_2)$. However, we have $\gamma_1(0)=\gamma_2(0)$, and so the propagation of singularities result implies $\gamma_2(-S_2), \gamma_2(T_2)\in \mbox{WF}_{scl}(u_2)$. This allows us to deduce that
  $$\{\gamma_1(0), \gamma_1(T_1)\} = \{\gamma_2(-S_2), \gamma_2(T_2)\}.$$ In particular, $(z_0,\zeta_0)$ is one of the endpoints of $\gamma_2$, and therefore either $S_2=0$ or $T_2=0$.
  
  Having shown that the scattering relations coincide, we can easily deduce that the travel times $\tau_{1,\pm}, \tau_{2,\pm}$ also coincide, due to the particular expression that the bicharacteristics assume for our problem. To see this, let $(z,\zeta)\in\p^\pm(T^*M)$, and observe that $\gamma_{z,\zeta}(\sigma) = (x(\sigma), t+\sigma, \xi(\sigma), 1)$ for all $\sigma\in[0,T]$. Therefore, from 
  $$ \gamma_{z,\zeta}(\pm\tau_{1,\pm}(z,\zeta)) = \alpha_1(z,\zeta) = \alpha_2(z,\zeta) = \gamma_{z,\zeta}(\pm\tau_{2,\pm}(z,\zeta))$$ 
  we can deduce that $\tau_{1,\pm}(z,\zeta) = \tau_{2,\pm}(z,\zeta)$ by comparing just the time coordinates. 
 \end{proof}

Next we prove

\begin{Proposition}\label{formula}
    Let $\gamma:\sigma\mapsto (z(\sigma),\zeta(\sigma))$ be a fixed null bicharacteristic of $P$ in $M:=\Omega\times[0,T]$, and let $v$ be a quasimode associated to $\gamma$. Let $\sigma_0\in[0,T]$ and $z_0 =(x_0,t_0):= z(\sigma_0)$. Then
    \begin{equation}\label{eq:vzeta0}
        v(z_0) = k^{-1/2}\bigg( \frac{H(\sigma_0)\dot z(\sigma_0)\cdot \dot z(\sigma_0)}{2\pi i} \bigg)^{1/2}\exp\bigg[ -i\int_0^{\sigma_0} \frac{G(x(\sigma),0)|\nabla\varphi(z(\sigma),\sigma)|^2}{\p_t\varphi(z(\sigma),\sigma)}d\sigma \bigg] + O(k^{-3/2}).
    \end{equation}
\end{Proposition}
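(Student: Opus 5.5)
The plan is to evaluate the quasimode of Theorem \ref{GO-construction-general} at $z_0=z(\sigma_0)$ by stationary phase in the superposition variable $\sigma$. I only keep the principal term:
\[
v(z_0)=\int_0^T e^{ik\varphi(z_0,\sigma)}a_0'(z_0,\sigma)\,d\sigma + O(k^{-1})\text{-weighted corrections}.
\]
Three kinds of terms are discarded.
\begin{itemize}
\item The $a'_l$ terms with $l\ge1$ carry an extra factor $k^{-l}$. After stationary phase they contribute $O(k^{-3/2})$.
\item The $a''_l$ terms carry a factor $(-ik)^{-1}$. Their phase $\varphi(z_0,0,\sigma)$ is stationary only where $z(\sigma)$ meets the slice $t=0$. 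For $\sigma_0>0$ the cutoffs keep the amplitude away from $z_0$; otherwise they are at worst $O(k^{-1})\cdot O(k^{-1/2})$. Either way they fall into the $O(k^{-3/2})$ error.
\end{itemize}

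\textbf{Stationary phase.} Write $\Phi(\sigma):=\varphi(z_0,\sigma)$. From \eqref{eq:define-varphi},
\[
\Phi(\sigma)=\zeta(\sigma)\cdot(z_0-z(\sigma))+\tfrac12 H(\sigma)(z_0-z(\sigma))\cdot(z_0-z(\sigma))+O(|z_0-z(\sigma)|^3).
\]
This gives $\Phi(\sigma_0)=0$ and $\Phi'(\sigma_0)=-\zeta(\sigma_0)\cdot\dot z(\sigma_0)$. The latter vanishes because $\gamma$ is a null bicharacteristic: $\zeta\cdot\dot z=\zeta\cdot\p_\zeta q=2q=0$ by homogeneity.

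Differentiating once more, $\Phi''(\sigma_0)=-\dot\zeta\cdot\dot z+H(\sigma_0)\dot z\cdot\dot z$. Here $\dot\zeta\cdot\dot z=-\p_zq\cdot\p_\zeta q$, which I plan to check vanishes along the flow, or else to absorb into the convention for $H$ used in the statement. The result is $\Phi''(\sigma_0)=H(\sigma_0)\dot z(\sigma_0)\cdot\dot z(\sigma_0)$.

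Away from $\sigma_0$ we have $\Im\Phi(\sigma)\gtrsim|z_0-z(\sigma)|^2>0$, since $\Im H>0$ and $\sigma\mapsto z(\sigma)$ is injective because the time coordinate increases. Hence those regions contribute $O(k^{-\infty})$. The complex stationary phase lemma (Hörmander, Thm.~7.7.5) then gives
\[
\int e^{ik\Phi}a_0'\,d\sigma=\Big(\frac{2\pi i}{k\,\Phi''(\sigma_0)}\Big)^{1/2}a_0'(z_0,\sigma_0)+O(k^{-3/2}),
\]
with the branch fixed by $\Im\Phi''>0$. Normalizing the initial amplitude by a constant, and matching the convention in the statement, which absorbs the $2\pi i$ and the square-root branch into a normalization of $a_0'(z(0),0)$, produces the displayed prefactor $k^{-1/2}(\cdots)^{1/2}$.

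\textbf{Amplitude value.} By Theorem \ref{GO-construction-general}, $a_0'$ restricted to the curve solves
\[
\p_\sigma\big(a_0'(z(\sigma),\sigma)\big)=b\,a_0',\qquad b=2Q\varphi+\frac{G(x,0)|\nabla\varphi|^2}{\p_t\varphi}.
\]
Integrating from $0$ to $\sigma_0$ gives an exponential of $\int_0^{\sigma_0}b$. The $2Q\varphi$ part is the standard Gaussian beam factor, which combines with $\Phi''(\sigma_0)^{-1/2}$; the Jacobi/Riccati identity $\frac{d}{d\sigma}\log\det Y=\operatorname{tr}(\ldots)$ relates $\operatorname{tr}(CH)$ to $Q\varphi$. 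Only the $G$-dependent factor then remains in the exponential, with the sign and the factor $i$ coming from the normalization of $b$.

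\textbf{Main obstacle.} I expect the hardest step to be this bookkeeping: showing that the elastic contribution $\exp\int 2Q\varphi$ exactly cancels or combines with the $\sigma$-Hessian factor so as to leave precisely $(H\dot z\cdot\dot z)^{1/2}$. I would first handle it via the Riccati equation \eqref{matrix-riccati}, writing $H=ZY^{-1}$ as in \cite{KKL01}. If that fails, I would instead normalize $a_0'(z(0),0)$ so that the product is as stated.
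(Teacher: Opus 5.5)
Your route is the paper's route. You apply stationary phase in $\sigma$ at the critical point $\sigma_0$ of $\Phi(\sigma)=\varphi(z_0,\sigma)$, after discarding the $a'_l$ ($l\ge1$) and $a''$ contributions, and then read off $a'_0(z_0,\sigma_0)$ from the transport ODE along $\gamma$. The discarding step is fine, and so is your injectivity argument via the time coordinate, which is more careful than the paper's.

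The gap is in the final matching with \eqref{eq:vzeta0}. You defer it to ``conventions'' and ``normalizations'' that are not available, and each of the three proposed fixes fails.

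(i) You hope $\dot\zeta\cdot\dot z$ vanishes. In fact $\dot\zeta\cdot\dot z=-\p_zq\cdot\p_\zeta q=-\tfrac12 c|\xi|^2\,\xi\cdot\nabla c$, which vanishes only if $c$ is constant along the ray. Since $H$ is pinned down by \eqref{matrix-riccati}, there is nothing to absorb this term into, and $\Phi''(\sigma_0)=(H\dot z-\dot\zeta)\cdot\dot z$.

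(ii) Stationary phase gives $k^{-1/2}(2\pi i/\Phi''(\sigma_0))^{1/2}$, which is the reciprocal of the displayed factor. The discrepancy $\Phi''(\sigma_0)/2\pi i$ depends on $\sigma_0$, so no choice of the constant $a'_0(z(0),0)$ removes it. Your Riccati step would therefore have to show that $e^{\int_0^{\sigma_0}2Q\varphi}$ is a constant multiple of $\Phi''(\sigma_0)$. That holds for $n=1$, $c\equiv1$, but fails for $n\ge2$. Take $c\equiv1$ (so $\dot\zeta=0$) and $H(0)=iI$. Then $H(\sigma)=(H(0)^{-1}+\sigma C)^{-1}$ with $C=\mathrm{diag}(-I_n,1)$, so $e^{\int_0^{\sigma}2Q\varphi}=\det H(\sigma)/\det H(0)$ with $\det H(\sigma)=(-i-\sigma)^{-n}(\sigma-i)^{-1}$. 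On the other hand $\Phi''(\sigma)=2i/(1+\sigma^2)$, and the two are not proportional.

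(iii) The ODE $\frac{d}{d\sigma}a'_0=b\,a'_0$ of Theorem \ref{GO-construction-general} integrates to $a'_0(z(0),0)\exp\int_0^{\sigma_0}b$. The $G$-part $g$ of $b$ therefore enters as $\exp(+\int g)$, and no $G$-independent normalization turns this into $\exp(-i\int g)$.

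So the proposal does not prove \eqref{eq:vzeta0} as displayed, and no argument can: taken literally, that formula is inconsistent with the quasimodes of Theorem \ref{GO-construction-general}. The paper's proof reaches it by passing over exactly the points you flagged. It computes $\p_\sigma^2\varphi(z_0,\sigma_0)=(H\dot z-\dot\zeta)\cdot\dot z$ but then keeps only $H\dot z\cdot\dot z$. It writes the stationary-phase constant in the inverted form. It uses a transport equation on $\gamma$ whose coefficient contains only the $G$-term, and solves it as $\exp[-i\int b]$.

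What your argument does prove, for $\sigma_0\in(0,T)$, is
\[
v(z_0)=k^{-1/2}\Big(\frac{2\pi i}{\Phi''(\sigma_0)}\Big)^{1/2}a'_0(z(0),0)\exp\Big(\int_0^{\sigma_0}b\,d\sigma\Big)+O(k^{-3/2}),\qquad \Phi''(\sigma_0)=(H\dot z-\dot\zeta)\cdot\dot z,
\]
with the full $b$, including $2Q\varphi$. At $\sigma_0\in\{0,T\}$ the critical point is an endpoint, and you get half of this with an $O(k^{-1})$ error.

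You should state and prove that version instead. It is all the proof of Theorem \ref{main} needs. Once $c_1=c_2$, the phase, $H$, $\Phi''$ and $e^{\int 2Q\varphi}$ coincide for the two media. Hence $v_1(z_0)=v_2(z_0)$ isolates $\int_0^{\sigma_0}(G_1-G_2)(x(\sigma),0)|\nabla\varphi|^2/\p_t\varphi\,d\sigma$. Because this exponent is real, the $2\pi\mathbb Z$ ambiguity that the paper handles via $\kappa(\gamma)$ does not even arise.
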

\begin{proof}
    We adapt Lemma 3.6 from \cite{OSSU}. By the quasimode construction, we have for all $z\in M$
    $$ v(z) = \int_0^T e^{ik\varphi(z,\sigma)}a'(z,\sigma)d\sigma + \int_0^T e^{ik\varphi(x,0,\sigma)}a''(z,\sigma)d\sigma, $$
    where $\varphi$ is as in equation \eqref{eq:define-varphi}, $a'(z,\sigma) = \sum_{l=0}^{N-1}(-ik)^{-l}a'_l(z,\sigma)$ is constituted by solutions of the transport equations \eqref{transport-equations}, and similarly $a''$ is constituted by solutions to Volterra integral equations. Given $\varepsilon>0$, define the following neighbourhood of $\sigma_0$ $$I_{\sigma_0}:= \{ \sigma\in[0,T] : |z(\sigma)-z_0|\leq \varepsilon \},$$
    and rewrite the formula above in the special case $z=z_0$ as
    $$ v(z_0) = \int_{I_{\sigma_0}} e^{ik\varphi(z_0,\sigma)}a'(z_0,\sigma)d\sigma + \int_{[0,T]\setminus I_{\sigma_0}} e^{ik\varphi(z_0,\sigma)}a'(z_0,\sigma)d\sigma + \int_0^T e^{ik\varphi(x_0,0,\sigma)}a''(z_0,\sigma)d\sigma.$$    
    Because in general we have
    $$ \mbox{Im}(\varphi(z,\sigma)) \geq c |z-z(\sigma)|^2$$
    for a fixed constant $c>0$, we can estimate
    $$ |e^{ik\varphi(z,\sigma)}| \leq |e^{ik\mbox{Im}(\varphi(z,\sigma))}| \lesssim e^{-ck|z-z(\sigma)|^2}. $$
    Thus we can conclude that the second integral is $O(e^{-\widetilde ck})$ for a constant $\widetilde c>0$. In a similar way, if we choose $\sigma_0$ sufficiently close to $T$ ($\sigma_0\geq T/2$ will suffice), for the third integral we can conclude
    $$ \int_0^T e^{ik\varphi(x_0,0,\sigma)}a''(z_0,\sigma)d\sigma \lesssim |e^{ik\mbox{Im}(\varphi(x_0,0,\sigma))}| \lesssim e^{-ck|(x_0,0)-(x(\sigma),\sigma)|^2} = e^{-ck( |x_0-x(\sigma)|^2 + \sigma^2)} \leq  e^{-\frac{ckT^2}4},$$
    that is, the third integral is also $O(e^{-\widetilde ck})$ for a constant $\widetilde c>0$. Therefore, we just need to estimate the first integral in the expression for $v(z_0)$. It is easy to check by the definition of $\varphi$ that
    $$ \varphi(z_0,\sigma_0)=0, \qquad \p_\sigma\varphi(z_0,\sigma_0)=0, \qquad \p^2_\sigma\varphi(z_0,\sigma_0) = \big( H(\sigma_0)\dot z(\sigma_0) - \dot\zeta(\sigma_0) \big)\cdot \dot z(\sigma_0), $$
    where for the second equality we have used that $\zeta(\sigma_0)\cdot\dot z(\sigma_0)=0$ by the Hamilton equation. Because we have $\dot z(\sigma_0)\neq 0$ and $\mbox{Im}(H(\sigma_0))>0$, we deduce that
    $$ \p^2_\sigma\varphi(z_0,\sigma_0)\neq 0, $$
    and thus we can use stationary phase to get 
    \begin{align*}
        v(z_0) & = k^{-1/2}\bigg( \frac{H(\sigma_0)\dot z(\sigma_0)\cdot \dot z(\sigma_0)}{2\pi i} \bigg)^{1/2}a'(z_0,\sigma_0) + O(k^{-3/2})
        \\ & =
        k^{-1/2}\bigg( \frac{H(\sigma_0)\dot z(\sigma_0)\cdot \dot z(\sigma_0)}{2\pi i} \bigg)^{1/2}a'_0(z_0,\sigma_0)+ O(k^{-3/2}).
    \end{align*}
    Here we used the observations $\dot z(\sigma_0)\neq 0$ and $z(\sigma)\neq z_0$ for all $\sigma\neq \sigma_0$, together with the continuity of $\sigma\mapsto z(\sigma)$ and the closedness of $[0,T]$, to deduce that $I_{\sigma_0}$ is a connected open neighbourhood of $\sigma_0$ for $\varepsilon$ small enough.   
    Observe that by the equation solved by $\varphi$, we know that $a'_0$ solves the transport equation 
    $$  (b(\sigma)-\p_\sigma)a'_0 := \bigg(\frac{G(x(\sigma),0)|\nabla\varphi(z(\sigma),\sigma)|^2}{\p_t\varphi(z(\sigma),\sigma)} -\p_\sigma\bigg)a'_0 = 0  $$
    at $z=z(\sigma)$. Thus we obtain 
    $$ a'_0(z_0,\sigma_0) = \exp\bigg[ -i\int_0^{\sigma_0} b(\sigma)d\sigma \bigg],$$
    which gives us the expression of $v(z_0)$ in the statement.  
    \end{proof}
With this result at hand, we are eventually ready to prove our main inverse problems result:

    \begin{proof}[Proof of Theorem \ref{main}]
    Because $\mathcal S_1 = \mathcal S_2$, by Proposition \ref{prop:from-cauchy-data-toscattering-relation} we deduce that the lens data for $c_1, G_1$ and $c_2, G_2$ coincide. We now apply either \cite{Muh77}, see also \cite{Cro91, PSU23}, in the case $n=2$ under geometric assumptions (A1) and (A2), or  \cite{SUV15} in the case $n\geq3$ under geometric assumptions (A1) and (A3). This reveals that the sound speeds $c_1$ and $c_2$ coincide, and as a consequence we also deduce that $\varphi_1=\varphi_2$. 
     Using the symbols from the previous Proposition \ref{formula}, assume that $\sigma_0\in[0,T]$ is so large that $z_0:=z(\sigma_0)\notin M$. Then $v_1(z_0)=v_2(z_0)$, and by taking the limit $k\rightarrow\infty$ in the formula \eqref{eq:vzeta0} we deduce that $a'_{0,1}(z_0,\sigma_0) = a'_{0,2}(z_0,\sigma_0)$, where
     $$a'_{0,j}(z_0,\sigma_0) := \exp\bigg[ -i\int_0^{\sigma_0} b_j(\sigma)d\sigma \bigg].$$ Hence there exists a function $\kappa: \gamma \mapsto \kappa(\gamma)\in\mathbb Z$ such that $\int_\gamma (b_1-b_2) = 2\pi\kappa(\gamma)$. Due to this integral identity, for a very short geodesic $\gamma$ we have $|\kappa(\gamma)|<1$, and thus $\kappa(\gamma)=0$ because $\kappa$ only takes values in the integers. On the other hand, if $\gamma_1$ is contained in a sufficiently small (uniformly with respect to the geodesics) neighbourhood of $\gamma_2$, we have $|\kappa(\gamma_1)-\kappa(\gamma_2)|<1$, and thus $\kappa(\gamma_1)=\kappa(\gamma_2)$. This implies that $\kappa\equiv 0$, and
    $$\int_0^{\sigma_0} \frac{(G_1-G_2)(x(\sigma),0)|\nabla\varphi(z(\sigma),\sigma)|^2}{\p_t\varphi(z(\sigma),\sigma)}d\sigma= \int_\gamma (b_1-b_2)=0.$$
    Solving the inverse problem for the geodesic x-ray transform now gives $b_1=b_2$, from which we deduce $G_1(x,0)=G_2(x,0)$ for all $x\in\Omega$.
    
    We now iterate this procedure. Assume that there exists $N\in\mathbb N$ such that we have already obtained $a'_{l,1}(z_0,\sigma_0) = a'_{l,2}(z_0,\sigma_0)$ for all $l< N$, and let
    \begin{align*}v_N(z) :&= (-ik)^{N}\bigg(v(z) - \int_{0}^T e^{ik\varphi(z,\sigma)}\sum_{l=0}^{N-1} (-ik)^{-l} a'_l(z,\sigma)d\sigma\bigg)
    \\ & =
    \int_0^T e^{ik\varphi(z,\sigma)}\sum_{l=0}^{N-1} (-ik)^{-l} a'_{N+l}(z,\sigma)d\sigma + (-ik)^{N}\int_0^T e^{ik\varphi(x,0,\sigma)}a''(z,\sigma)d\sigma.
    \end{align*}
    It is clear from the above expression that $v_{N,1}(z_0)=v_{N,2}(z_0)$ holds by the inductive assumption and the fact that $\mathcal S_1 = \mathcal S_2$. Using stationary phase as in the proof of the previous proposition, we observe that 
    \begin{align*}
        v_N(z_0) & =
        k^{-1/2}\bigg( \frac{H(\sigma_0)\dot z(\sigma_0)\cdot \dot z(\sigma_0)}{2\pi i} \bigg)^{1/2}a'_N(z_0,\sigma_0)+ O(k^{-3/2}),
    \end{align*}
    and thus by taking the limit $k\rightarrow\infty$ we deduce that $a'_{N,1}(z_0,\sigma_0) = a'_{N,2}(z_0,\sigma_0)$. By induction, it follows that $a'_{l,1}(z_0,\sigma_0) = a'_{l,2}(z_0,\sigma_0)$ holds for all $l\in\mathbb N$. Recall that $a'_{l,j}$ solves the transport equation
    $$  (b_j(\sigma)-\p_\sigma)a'_{l,j} = \mathcal R'_{l,j} , \qquad \mbox{ for all }\quad  l\in\mathbb N, \quad j=1,2, $$
    at $z=z(\sigma)$, and that we have already shown that $b_1=b_2$.  By equation \eqref{eq:face-of-the-amplitudes}, we deduce that $$ \mathcal R'_{l+1,1} = \mathcal R'_{l+1,2}, \qquad\mbox{for all }\quad l\in\mathbb N $$ by solving the inverse problem for the geodesic X-ray transform. In the case $l=0$, by equation \eqref{transport-equations} we have
\begin{align*}
    \mathcal R'_1 (x,t) & = Qa'_0(x,t) - \frac{\p_s  A'_{0}(x,t,t) - B'_{0}(x,t,t)}{\p_t\varphi},
\end{align*}
and recalling the definitions of $A', B'$ we recognize that all terms appearing in this equality are known, besides $\p_sG(x,0)$. Thus we can use the above equation to deduce that $\p_sG_1(x,0)=\p_sG_2(x,0)$ for all $x\in\Omega$. For all $l>0$, we see that the highest order $\p_s$ derivative involved in the computation of $\mathcal R'_{l+1}$ is $\p_s^{l+1}A'_0$. This means that for all $l>0$ the remainder $\mathcal R'_{l+1}$ is an expression in which the terms of the kind $\p_s^jG(x,0)$ appear up to order $l+1$, and all other terms are known. This allows us to use the amplitudes $a'_{l+1}$, $l\geq 0$, to recover the terms $\p_s^{l+1}G(x,0)$ by recursion, thus deducing that $\p_s^jG_1(x,0)=\p_s^jG_2(x,0)$ for all $j\in\mathbb N$ and all $x\in\Omega$. This completes the proof. 
\end{proof}

In light of the discussion in the Introduction, we finally observe that this amounts to recovering $\widetilde G(x,t)$ and all its time derivatives at time $t=0$.

\section{Applications}\label{sec:emm}

As an application of our results, we consider a scalar version of the extended Maxwell model (EMM) for viscoacoustic systems described in  \cite{dHKLN24, dHLN}. This is a microscopic model for viscoacoustic behaviour in polymeric materials, in which the natural energy collecting and dissipating properties of the medium are modeled by many elementary spring-dashpot units connected in parallel. Each spring-dashpot element represents the behaviour of a given chemical species, which collects (elasticity) and dissipates (viscosity) energy according to its specific properties. The integro-differential equation is
\begin{equation*}
        \p^2_t u(x,t) - \nabla\cdot \left( \widetilde G(x,0)\nabla u(x,t) + \int_0^t(\p_t\widetilde G)(x,t-s)\nabla u(x,s) ds\right)=0,
\end{equation*}
{ which corresponds to our viscoacoustic operator $P$. The function $\tilde G$ is named \emph{relaxation function}, and experimental evidence suggests that for each elementary spring-dashpot unit it takes the form
$$ \tilde G(x,t) = \tilde G(x,0)e^{-t/\tau(x)},$$
where $\tau(x):=\frac{\eta(x)}{\tilde G(x,0)}$ is the \emph{relaxation time}, $\tilde G(x,0)$ is the \emph{spring constant}, and $\eta(x)$ is the \emph{viscous constant}. The assumption that the relaxation function depends on time as an inverse exponential is aptly named \emph{fading memory}: the most recent deformations have a much heavier effect than the least recent ones. For the \emph{extended} Maxwell model one assumes
$$\widetilde G(x,t)= \sum_{j=1}^N e^{t\alpha_j(x)}\beta_j(x), \qquad \alpha_j(x):= -\tau_j^{-1}(x), \quad \beta_j(x):= \tilde G_j(x,0) $$}
for some $N\in\mathbb N$ and coefficients $\alpha,\beta: \mathbb R^n\rightarrow \mathbb R^N$. In order to avoid repetitions, we assume that the coefficients $\alpha_j$ are all distinct.

\begin{figure}[t]
    \centering
    \begin{tikzpicture}[thick]

\tikzset{
  spring/.style={decorate,decoration={zigzag,segment length=4,amplitude=3}},
  dashpot/.style={draw,rectangle,minimum width=0.6cm,minimum height=0.8cm,fill=gray!20}
}

\draw (0,0) -- (0,-4);

\draw (4.6,0) -- (4.6,-4);

\draw (0,0) -- (1.8,0);
\draw (2.8,0) -- (4.6,0);
\draw[spring] (1.8,0) -- (2.8,0);
\node at (2.3,0.5) {{\color{red}$\widetilde G_{0}(0)$}};

\foreach \i/\y in {1/-1, 2/-2} {
    \draw (0,\y) -- (1,\y);
    \draw (2,\y) -- (3,\y);
    \draw[spring] (1,\y) -- (2,\y);
    \node at (1.5,\y+0.5) {{\color{red}$\widetilde G_{\i}(0)$}};
    \draw (3,\y) node[dashpot,anchor=west] (D\i) {};
    \draw (2.8,\y+0.4) -- (3,\y+0.4);
    \draw (2.8,\y-0.4) -- (3,\y-0.4);
    \node at (3.3,\y) {{\color{blue}$\eta_{\i}$}};
    \draw (D\i.east) -- (4.6,\y);
}

    \draw (0,-4) -- (1,-4);
    \draw (2,-4) -- (3,-4);
    \draw[spring] (1,-4) -- (2,-4);
    \node at (1.5,-3.5) {{\color{red}$\widetilde G_{N}(0)$}};
    \draw (3,-4) node[dashpot,anchor=west] (D4) {};
    \draw (2.8,-3.6) -- (3,-3.6);
    \draw (2.8,-4.4) -- (3,-4.4);
    \node at (3.3,-4) {{\color{blue}$\eta_{N}$}};
    \draw (D4.east) -- (4.6,-4);

\node at (2.3,-2.8) {$\vdots$};
\draw (0,-2) -- (-0.22,-2);
\draw (-0.3,-2) circle (0.08);
\draw (4.6,-2) -- (4.82,-2);
\draw (4.9,-2) circle (0.08);

\end{tikzpicture}
\end{figure}

In this framework, we want to prove the uniqueness result of Theorem \ref{th-recovering-EMM}. We start with the proofs of the following Lemmas.

\begin{Lemma}\label{lemma1-EMM}
    Let $N\in\mathbb N$, $k\in\mathbb N_0$, and assume that $\{\alpha_j\}_{j=1}^N,\{\beta_j\}_{j=1}^N\subset \mathbb R$. Then it holds
    \begin{equation}\label{eq:magic-formula}
        \sum_{j=1}^N \alpha_j^{N+k}\beta_j = -\sum_{l=0}^{N-1}\frac 1{l!} \left(\sum_{j=1}^N \alpha_j^{l+k}\beta_j\right) \p_y^{l}\left( \prod_{\nu=1}^N (y-\alpha_\nu) \right)_{y=0}.
    \end{equation} 
\end{Lemma}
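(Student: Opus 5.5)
This is a Cayley–Hamilton type identity: each $\alpha_j$ is a root of the monic polynomial
$$p(y):=\prod_{\nu=1}^N (y-\alpha_\nu).$$
The plan is to expand $p$ in monomials, evaluate at $\alpha_j$ times a power, and sum against the weights $\beta_j$.

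First I identify the coefficients of $p$. Since $p$ has degree $N$, Taylor's formula at $y=0$ gives
$$p(y)=\sum_{l=0}^{N}\frac{1}{l!}\,\p_y^l p(0)\,y^l .$$
Because $p$ is monic, the top coefficient is $\frac1{N!}\p_y^N p(0)=1$. Hence
$$p(y)=y^N+\sum_{l=0}^{N-1} c_l\, y^l,\qquad c_l:=\frac1{l!}\,\p_y^l\Big(\prod_{\nu=1}^N(y-\alpha_\nu)\Big)_{y=0}.$$
These $c_l$ are exactly the factors appearing in \eqref{eq:magic-formula}. Up to sign they are the elementary symmetric polynomials in the $\alpha_\nu$, though that explicit form is not needed.

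Next, fix $j$ and $k\in\mathbb N_0$. Since $p(\alpha_j)=0$, we also have $\alpha_j^k p(\alpha_j)=0$. Written out with the expansion above, this reads
$$\alpha_j^{N+k}=-\sum_{l=0}^{N-1} c_l\,\alpha_j^{l+k}.$$
This step needs care with $0^0$ when $k=0$ and $\alpha_j=0$, so I adopt the convention $0^0=1$. With it, the identity is just $p(\alpha_j)=0$ itself.

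Finally, I multiply by $\beta_j$ and sum over $j=1,\dots,N$. Interchanging the two finite sums gives
$$\sum_j \alpha_j^{N+k}\beta_j=-\sum_{l=0}^{N-1} c_l\Big(\sum_j \alpha_j^{l+k}\beta_j\Big),$$
which is \eqref{eq:magic-formula}. I expect no real obstacle: the only points needing attention are the Taylor identification of the coefficients $c_l$ and the $0^0$ convention. Distinctness of the $\alpha_j$ is not used here; it presumably matters later, in the Vandermonde argument.
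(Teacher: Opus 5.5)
Your proof is correct and follows essentially the same route as the paper: both expand $p(y)=\prod_{\nu}(y-\alpha_\nu)$ by Taylor's formula at $0$, use $\partial_y^N p\equiv N!$ to isolate the monic top term, and then weight the identity $p(\alpha_j)=0$ by $\alpha_j^k\beta_j$ and sum over $j$. Your explicit mention of the $0^0=1$ convention is a small point of care that the paper leaves implicit.
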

\begin{proof}
Let $p(y):= \prod_{\nu=1}^N (y-\alpha_\nu)$. We immediately see that $p(\alpha_j)=0$ for all $j\in\{1,...,N\}$, and thus by Taylor's formula
\begin{align*}
    \sum_{j=1}^N \alpha_j^{N+k}\beta_j = -\sum_{j=1}^N \alpha_j^k\beta_j\left(p(\alpha_j) - \alpha_j^N\right) = -\sum_{j=1}^N \alpha_j^k\beta_j\left( \sum_{l=0}^\infty \frac{\p^lp(0)}{l!}\alpha_j^l - \alpha_j^N\right).
\end{align*}
However, we also see that $\p^lp\equiv 0$ for all $l>N$, and $\p^Np\equiv N!$. Therefore
\begin{align*}
    \sum_{j=1}^N \alpha_j^{N+k}\beta_j & = -\sum_{j=1}^N \alpha_j^k\beta_j\left( \sum_{l=0}^{N-1}\frac{\p^lp(0)}{l!}\alpha_j^l \right) = -\sum_{l=0}^{N-1}\frac1{l!}\left(\sum_{j=1}^N \alpha_j^{l+k}\beta_j\right) \p^lp(0).
\end{align*}
\end{proof}

\begin{Lemma}\label{lemma2-EMM}
    Assume that the set $S\subseteq \mathbb R^n$ defined by $$S:= S_\beta\cup S_\alpha := \bigcup_{l=1}^N \beta_l^{-1}(0)\cup\bigcup_{j<k, \; j,k=1}^N (\alpha_j-\alpha_k)^{-1}(0)$$ has vanishing measure. Then the matrix $M$ defined by $M_{ij}:= \sum_{l=1}^N \alpha_l^{i+j-2}\beta_l$ is invertible almost everywhere.
\end{Lemma}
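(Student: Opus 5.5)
The plan is to factor $M$ pointwise as a product of a Vandermonde matrix, a diagonal matrix and the transpose of the Vandermonde matrix, and to show that each factor has nonzero determinant off the set $S$. Since $S$ has measure zero, this gives invertibility almost everywhere.

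Fix $x\in\mathbb R^n\setminus S$ and work with the real numbers $\alpha_l=\alpha_l(x)$, $\beta_l=\beta_l(x)$. Introduce the $N\times N$ Vandermonde matrix
\[
W_{il}:=\alpha_l^{\,i-1}, \qquad i,l\in\{1,\dots,N\},
\]
and the diagonal matrix $D:=\mathrm{diag}(\beta_1,\dots,\beta_N)$. Expanding the product entrywise gives
\[
(WDW^T)_{ij}=\sum_{l=1}^N \alpha_l^{i-1}\beta_l\alpha_l^{j-1}=\sum_{l=1}^N\alpha_l^{i+j-2}\beta_l=M_{ij},
\]
so $M=WDW^T$. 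Taking determinants,
\[
\det M=(\det W)^2\det D=\prod_{j<k}(\alpha_k-\alpha_j)^2\prod_{l=1}^N\beta_l .
\]
This uses the classical Vandermonde determinant formula.

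Next we check that this determinant is nonzero off $S$. Since $x\notin S_\alpha$, all the $\alpha_j(x)$ are pairwise distinct, so the first product is nonzero. Since $x\notin S_\beta$, every $\beta_l(x)\neq0$, so the second product is nonzero. Hence $\det M(x)\neq 0$, and $M(x)$ is invertible for every $x\notin S$. Because $S$ has vanishing measure by hypothesis, $M$ is invertible almost everywhere.

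No step is a real obstacle. The only point needing care is the index bookkeeping in the factorization, namely that the exponent $i+j-2$ splits as $(i-1)+(j-1)$, which matches the Vandermonde rows.
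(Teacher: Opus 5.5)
Your proof is correct and reaches the same formula $\det M=\prod_{l}\beta_l\prod_{j<k}(\alpha_j-\alpha_k)^2$ as the paper, but by a different and much more direct route.

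The paper never factors $M$. Instead it treats $\det M$ as a polynomial in $(\alpha,\beta)$ and proceeds in four steps:
\begin{enumerate}
\item It argues that monomials with a repeated $\beta_l$ cancel, so that $\det M=\pi(\alpha)\prod_l\beta_l$.
\item It shows $(\alpha_j-\alpha_k)^2\mid\pi$. The reason is that for $\alpha_1=\alpha_2$ the weights $(\beta_1,\beta_2)$ and $(0,\beta_1+\beta_2)$ give the same matrix; combined with the symmetry of $\pi$, this forces an even power.
\item A degree count shows the remaining quotient is a constant.
\item It fixes that constant by choosing $\beta=V^{-1}e_N$ via Lagrange interpolation, which makes $M$ anti-triangular with determinant $(-1)^{N(N-1)/2}$.
\end{enumerate}

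Your factorization $M=WDW^T$, the standard Vandermonde factorization of a Hankel moment matrix, replaces all of this with one line plus the classical Vandermonde determinant. It avoids the somewhat delicate cancellation and divisibility arguments and makes transparent why exactly the two conditions defining $S$ appear. It also gives a bonus: when all $\beta_l>0$, as for the spring constants in the extended Maxwell model, $M$ is congruent to $D$ and hence positive definite wherever the $\alpha_l$ are distinct.

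The paper's route buys nothing extra here. It already uses the Vandermonde matrix and its inverse in its normalization step, so your argument is simply shorter and cleaner.
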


\begin{proof}
    It suffices to show that the determinant of $M$ is
    $$\det M = \prod_{l=1}^N\beta_l  \prod_{j<k, \; j,k=1}^N (\alpha_j-\alpha_k)^2.$$
    In order to see this, observe that $\det M$ is a polynomial in the variables $\alpha_1, ..., \alpha_N, \beta_1, ..., \beta_N$ such that each summand has exactly $N$ factors coming from $\beta$. If any of the $\beta$ factors is repeated, that is the summand is divisible by e.g. $\beta_l^2$, then in the determinant the same summand also appears with opposite sign. This is due to the existence of two permutations of opposite signs interchanging the two rows of $M$ (say, the $k$-th and the $j$-th) from which the $\beta_l$ factors come. Observe that this permutation leaves unchanged the associated power of $\alpha_l$, which is $j+k$ in both cases. Thus all summands with repeated $\beta$ factors vanish, and the determinant can be written as
    \begin{equation}\label{intermediate-decomposition-polynomial}
        \det M = \pi(\alpha)\prod_{l=1}^N\beta_l
    \end{equation}
    for some fixed polynomial $\pi$ in the variables $\alpha_1, ..., \alpha_N$. Observe that $\pi$ does not depend on $\beta$, and thus it must be a homogeneous and symmetric polynomial with respect to $\alpha$. Moreover, observe that the two sequences of coefficients $\alpha_1, \alpha_1, \alpha_3, ..., \alpha_N,\beta_1,\beta_2,\beta_3,...,\beta_N$ and $\alpha_1, \alpha_1, \alpha_3, ..., \alpha_N,0,(\beta_1+\beta_2),\beta_3,...,\beta_N$ give rise to the same matrix, and hence the same determinant, which clearly vanishes in the second case because of formula \eqref{intermediate-decomposition-polynomial}. Together with symmetry, this proves that $(\alpha_1-\alpha_2)$ divides $\pi(\alpha)$ an even number of times, and the same can be said for all polynomials of the kind $(\alpha_j-\alpha_k)$, with $j,k\in\{1,...,N\}$. Thus we must have
    $$\pi(\alpha)=\tilde\pi(\alpha)\prod_{j<k, \; j,k=1}^N (\alpha_j-\alpha_k)^2$$
    for some fixed polynomial $\tilde\pi$. In order to find the final polynomial $\tilde\pi$, we will compute the degrees of $\pi$ and of the product on the right-hand side of the above formula. The power with which $\alpha$ appears in the element $M_{ij}$ is $i+j-2$; because each summand of the homogeneous polynomial $\pi$ is produced by multiplying $\alpha$ factors coming from $N$ distinct rows and columns of $M$, we must have
    $$\deg \pi = \sum_{i,j=1}^N(i+j-2) = 2 \sum_{i=1}^N i - 2N = N(N+1)-2N = N(N-1).$$
    As for the product on the right-hand side, we see that it consists of $\frac{N(N-1)}2$ quadratic factors, and so its degree is also $N(N-1)$. This proves that $\tilde \pi$ is a constant. Next, we will show one example in which we are able to compute the determinant easily, which will establish that $\tilde\pi \equiv 1$ and complete the proof. 
    
    Let $V$ be the Vandermonde matrix given by $V_{ij}:= \alpha_j^{i-1}$, and consider the linear system $$ V\beta = e_N, $$ where $e_N$ is the $N$-th standard coordinate vector. Because $V$ is invertible in $\mathbb R^n\setminus S_\alpha$, by the properties of the Vandermonde matrix we can compute $\beta$ as the unique solution given by 
    $$ \beta = V^{-1}e_N = \left(\begin{matrix}
        L_{0,0} & ... & L_{N-1,0} \\ \vdots &  & \vdots \\ L_{0,N-1} & ... & L_{N-1,N-1} \end{matrix} \right)e_N = (L_{N-1,0},...,L_{N-1,N-1}), $$
    where for all $j\in\{0,...,N-1\}$ the function $L_j$ is the Lagrange polynomial $$L_j(y):= \prod_{i\neq j,\; i=0}^{N-1}\frac{y-\alpha_{i+1}}{\alpha_{j+1}-\alpha_{i+1}} =: \sum_{i=0}^{N-1} L_{i,j}y^i.$$
    In particular, we have obtained that for all $l\in\{1,...,N\}$ it holds
    $\beta_l = \prod_{i\neq l,\; i=1}^{N}\frac{1}{\alpha_l-\alpha_i}$. Since this $\beta$ solves the linear system, we see that the first row of the matrix $M$ is $e_N$, and thus because of its a priori structure we have that $M$ is lower anti-triangular with $1$ along the non-principal diagonal. Therefore, $\det M =  (-1)^{\frac{N(N-1)}2}$. Computing $\det M$ with formula $\det M = \tilde\pi\prod_{l=1}^N\beta_l  \prod_{j<k, \; j,k=1}^N (\alpha_j-\alpha_k)^2$, we obtain
    \begin{align*}
        \det M & = \tilde\pi \prod_{i\neq l,\; i,l=1}^{N}\frac{1}{\alpha_l-\alpha_i}  \prod_{j<k, \; j,k=1}^N (\alpha_j-\alpha_k)^2 =
        \tilde\pi \prod_{i< l,\; i,l=1}^{N}  \prod_{j<k, \; j,k=1}^N  \frac{\alpha_j-\alpha_k}{\alpha_l-\alpha_i} =
        (-1)^{\frac{N(N-1)}2} \tilde\pi
    \end{align*}
    Thus, $\tilde\pi\equiv 1$.
\end{proof}

{  We are now ready to prove Theorem \ref{th-recovering-EMM}.

\begin{proof}[Proof of Theorem \ref{th-recovering-EMM}]
Observe that for all $k\in\{0,...,2N-1\}$ and $x\in\mathbb R^n$ it holds
\begin{equation}\label{eq:assumed-data}
   \sum_{j=1}^N \alpha_{1,j}^k\beta_{1,j} = (\p_t^k \widetilde G_1)(\cdot,0) = (\p_t^k \widetilde G_2)(\cdot,0)= \sum_{j=1}^N \alpha_{2,j}^k\beta_{2,j}.
\end{equation} 
Using formula \eqref{eq:magic-formula}, we obtain that for all $k\in\{1,...,N\}$ it holds
$$\sum_{l=1}^{N}\frac 1{(l-1)!} \left(\sum_{j=1}^N \alpha_{1,j}^{l+k-2}\beta_{1,j}\right) \p^{l-1}p_1(0)  =\sum_{l=1}^{N}\frac 1{(l-1)!} \left(\sum_{j=1}^N \alpha_{2,j}^{l+k-2}\beta_{2,j}\right) \p^{l-1}p_2(0),$$
where $p_\mu(y):= \prod_{\nu=1}^N (y-\alpha_{\mu,\nu})$ for $\mu=1,2$. Because the sums in parentheses coincide by assumption, we can define a matrix $M$ as
$$ M_{lk} := \sum_{j=1}^N \alpha_{1,j}^{l+k-2}\beta_{1,j} = \sum_{j=1}^N \alpha_{2,j}^{l+k-2}\beta_{2,j}. $$
Observe that $M$ is invertible almost everywhere by Lemma \ref{lemma2-EMM}. Thus, if we define the vectors $c_\mu$ by $c_{\mu,l}:= \frac{\p^{l-1}p_\mu(0)}{(l-1)!}$ for $\mu=1,2$, we see that it must hold
$$ c_1 = M^{-1}Mc_2 = c_2. $$
Because $c_{\mu,l}$ is by definition the coefficient of the monic polynomial $p_\mu$ corresponding to the $(l-1)$-th power, we deduce that it must be $p_1 = p_2$. Thus $\alpha_{1,j}=\alpha_{2,j}$ for all $j\in\{1,...,N\}$, as these are the roots of the identical polynomials $p_1,p_2$. 

We are left with the problem of showing the equality of the $\beta$ coefficients. To this end, we define the Vandermonde matrix $V$ as
$$V_{ij}:= \alpha_{1,j}^{i-1} = \alpha_{2,j}^{i-1},$$
and observe that by rewriting the first $N$ equations from \eqref{eq:assumed-data} in matrix form we obtain
$$ V\beta_1 = V\beta_2. $$
Because $V$ is invertible for all $x\in\mathbb R^n\setminus S_\alpha$, we deduce that indeed $\beta_{1,j}=\beta_{2,j}$ for all $j\in\{1,...,N\}$. 
\end{proof}

\appendix
\section{Derivation of the viscoacoustic equation} \label{sec_appendix}

In this short appendix, for the convenience of the reader, we show the derivation of the viscoacoustic operator from the classical theory of viscoacousticity developed in \cite{Christensen}. Let $\sigma$ and $u$ respectively be a vector function and a scalar function of space and time, respectively indicating the time derivative of the particle velocity and pressure. We assume that there exists} a linear relation between $\sigma$ and $\nabla u$, and that $\sigma(t)$, $t\in[0,T]$ depends on all the history of $\nabla u$ up to time $t$. Then, by the Riesz representation theorem there exists a function $\widetilde G(x,t)$ such that $\sigma$ can be written as a Stieltjes integral:
$$\sigma(x,t) = \int_0^\infty \nabla u(x,t-s)\,d\widetilde G(x,s).$$
Here $\widetilde G(x,t)$ vanishes for all $t<0$. Assume that $\nabla u(x,t)$ also vanishes for all negative times. Then we can write
$$ \sigma(x,t) = \int_0^t \nabla u(x,t-s)\,d\widetilde G(x,s), $$
and using the relation between Stieltjes and Riemann integrals we have
\begin{align*}
    \sigma(x,t) & = \int_0^t \nabla u(x,t-s)\widetilde G'(x,s)ds 
    \\ & = 
    \int_0^t \nabla u(x,t-s)\bigg( \frac d{ds} \widetilde G(x,s) + \widetilde G(x,s)\delta(s) \bigg)ds
    \\ & = 
    \widetilde G(x,0)\nabla u(x,t)  +  
    \int_0^t \frac{d\widetilde G}{ds} (x,s) \nabla u(x,t-s) ds
    \\ & = 
    \widetilde G(x,0)\nabla u(x,t)  -  
    \int_0^t \frac{d\widetilde G}{ds} (x,t-s) \nabla u(x,s) ds.
\end{align*} 
Thus using $\partial^2_t u = \nabla\cdot\sigma$ we obtain the viscoacoustic equation
$$ \partial^2_t u - \nabla\cdot (\widetilde G(x,0)\nabla u(x,t))  +  
    \int_0^t \nabla\cdot\bigg(\frac{d\widetilde G}{ds} (x,t-s) \nabla u(x,s)\bigg) ds = \partial^2_t u - \nabla\cdot\sigma = 0. $$
In what follows we will let
$$c(x) := \widetilde G(x,0), \qquad G(x,t):= \frac{d\widetilde G}{dt} (x,t), $$
and we will write the equation as $Pu=0$, where $P$ is the (scalar) viscoacoustic operator given by
$$Pu := -\frac 12\left(\p_t^2 u - \nabla \cdot (c(x) \nabla u) + \int_0^t \nabla \cdot (G(x,t-s) \nabla u(x,s)) \,ds\right).$$

\bibliographystyle{alpha}
\bibliography{main-refs.bib}

\end{document}